\newtheorem{theorem}{Theorem}[section]
\newtheorem{proposition}[theorem]{Proposition}
\newtheorem{lemma}[theorem]{Lemma}
\newtheorem{corollary}[theorem]{Corollary}
\newtheorem*{remark}{Remark}
\newcommand\myenum[1]
\begin{document}
\title[characterizations of L-weakly compact sets using uaw-convergence]{Some characterizations of L-weakly compact sets using the unbounded absolutely weakly convergence and applications }

\author{ H. Khabaoui}
\author{ J. H'michane}
\author{K. El Fahri}

\address{Hassan Khabaoui, Universit\'{e} Moulay Ismail, Facult\'{e} des Sciences, D\'{e}partement de Math\'{e}matiques, B.P. 11201 Zitoune, Mekn\`{e}s, Morocco}
\email{khabaoui.hassan2@gmail.com}

\address{Jawad H'michane, Universit\'{e} Moulay Ismail, Facult\'{e} des Sciences, D\'{e}partement de Math\'{e}matiques, B.P. 11201 Zitoune, Mekn\`{e}s, Morocco, And, Gosaef, Faculte des Sciences de Tunis, Manar II-Tunis (Tunisia)}
\email{hm1982jad@gmail.com}

\address{Kamal El fahri, Department of Mathematics, Ibn Zohr University, Faculty of Sciences, Agadir, Morocco.}
\email{kamalelfahri@gmail.com }

\begin{abstract}
Based on the concept of unbounded absolutely weakly convergence, we give new characterizations  of L-weakly compact sets. As applications, we find some properties of order weakly compact operators. Also, a new characterizations of order continuous Banach lattices are obtained.
\end{abstract}

\keywords{L-weakly compact sets, order continuous Banach lattices, weakly sequentially lattices operations, order weakly compact operators}
\subjclass[2010]{Primary 46B07. Secondary 46B42, 47B50}
\maketitle

\section{Introduction}
In \cite{Zabeti}, the author introduce the concept of unbounded absolutely weakly convergence. Namely, a net $(x_{\alpha})$ is unbounded absolutely weakly convergent (uaw-convergent) to $x$ if $(|x_{\alpha} -x|\wedge u)$ converges  weakly to zero for every $u \in E^+$; we write $x_{\alpha} {\overset{uaw}{\longrightarrow}} x$. They study the relations between this concept and other concepts of convergence (weakly convergence, unbounded order and unbounded norm convergence). Also, some characterizations  of order continuous and of reflexive Banach lattices are obtained. Several recent papers investigated this concept of convergence have been announced (\cite{chen19, chen20, Zabeti19, Zabeti20}).

In this work, we study the L-weakly compact sets using the unbounded absolutely weakly convergence, in terms of uaw-convergence we give some characterizations about order (L)-Dunford-Pettis operators  and about order weakly compact operators. We give a generalisation of Theorem 4.34 \cite{AB} and also generalizations of some results  given in \cite{Zabeti, chen19} are obtained. As applications, we characterize Banach lattices under which weakly convergence implies uaw-convergence, also some new characterizations of order continuous Banach lattices are obtained.

\section{Preliminaries and notations}

Recall that  a nonempty bounded subset $A$ of a Banach lattice $E$ is said to be \textbf{L-weakly compact} if  ${\underset{n\longrightarrow +\infty}{\lim}} \|x_n\| = 0$ for every disjoint sequence $(x_n)\subset sol(A)$. A bounded subset $A$ of a Banach space $X$ is called \textbf{limited} if, for every weak* null sequence $(f_n)$ in $X'$ we have ${\underset{x\in A}{\sup}}|f_n(x)|\longrightarrow 0$. Equivalently, for every sequence $(x_n)\subset A$ and for every weak* null sequence $(f_n)$ in $X'$ we have ${\underset{n\longrightarrow +\infty}{\lim}} f_n(x_n) = 0$.  A bounded subset $A$ of a Banach space $X$ is called \textbf{Dunford-Pettis set} if, for every weak null sequence $(f_n)$ in $X'$ we have ${\underset{x\in A}{\sup}}|f_n(x)|\longrightarrow 0$. A bounded subset $A$ of $X'$ is said to be \textbf{(L)-set} if, for every weakly null sequence $(x_n)$ in $X$ we have ${\underset{f\in A}{\sup}}|f(x_n)|\longrightarrow 0$. If $E$ is a Banach lattice then a bounded subset $A$ of $E'$ is said to be \textbf{almost (L)-set} if, for every disjoint weakly null sequence $(x_n)$ in $E$ we have ${\underset{f\in A}{\sup}}|f(x_n)|\longrightarrow 0$. A net $(x_{\alpha})$ in  a Riesz space $X$ is \textbf{unbounded order convergent} (abb. uo-convergent) if, $ |x_{\alpha}-x|\wedge u$ converges  to $0$ in order (abb. $x_{\alpha}{\overset{uo}{\longrightarrow}} 0$), for each $u\in X^+$. A net $(x_{\alpha})$ in  a Banach lattice $E$ is \textbf{unbounded norm convergent} (abb. un-convergent) if, $\| |x_{\alpha}-x|\wedge u  \| \longrightarrow 0 $ (abb. $x_{\alpha}{\overset{un}{\longrightarrow}} 0$), for each $u\in E^+$. Note that the norm convergence implies the un-convergence. We can easily check, that un-convergence coincides with norm convergence for order bounded nets, in particular on a Banach lattice with order unit. The lattice operations in a Banach lattice $E$ are  weakly  sequentially continuous, if the sequence $(|x_{n}|)$  converges to $0$ in the weak topology, whenever the sequence $(x_{n})$  converges weakly  to $0$ in $E$. A vector lattice $E$ is Dedekind $\sigma$-complete if every majorized countable nonempty subset of $E$ has a supremum. A Banach lattice $E$ is order continuous, if for each net $(x_{\alpha })$ such that $x_{\alpha }\downarrow 0$ in $E$, the sequence $(x_{\alpha })$ converges to $0$ for the norm $\Vert \cdot \Vert $, where the notation $x_{\alpha }\downarrow 0$ means that the sequence $(x_{\alpha })$ is decreasing, its infimum exists and $\inf(x_{\alpha })=0$. A Banach lattice has the Schur (resp. positive Schur) property whenever  $x_n{\overset{w}{\longrightarrow}} 0$ (resp. $0\leq x_n{\overset{w}{\longrightarrow}} 0$)  implies that $\|x_n\|\longrightarrow0$.

We will use the term operator to mean a bounded linear mapping. A linear mapping between two vector lattices $E$ and $F$ is positive if $T(x)\geq 0$ in $F$ whenever $x\geq 0$ in $E$. Note that each positive linear mapping on a Banach lattice is continuous. If an operator $T:E\longrightarrow F$ between two Banach lattices is positive then, its adjoint $T^{\prime }:F^{\prime}\longrightarrow E^{\prime }$ is likewise positive, where $T^{\prime }$ is defined by $T^{\prime }(f)(x)=f(T(x))$ for each $f\in F^{\prime }$ and for each $x\in E$.

We need to recall some definitions of operators which are used along the paper.
Let $E$, $F$ be Banach lattices and $X$, $Y$ be Banach spaces.

\begin{itemize}
\item An operator $T:X\longrightarrow F$ is said to be L-weakly compact  if $T(B_X)$ is an L-weakly compact subset of $F$.

\item An operator $T:E\longrightarrow Y$ is said to be M-weakly compact  if ${\underset{n\longrightarrow +\infty}{\lim}} \|T(x_n)\| = 0$ for every disjoint bounded sequence $(x_n)\subset E$.

%\item An operator $T:X\longrightarrow Y$ is said to be weakly compact  if $T(B_X)$ is a relatively weakly compact subset of $Y$.

\item An operator $T:E\longrightarrow Y$ is said to be order weakly compact if $T([0,x])$ is a relatively weakly compact subset of $X$ for every $x$ in $E^+$.
\item An operator  $T:X\longrightarrow Y$ is said to be Dunford-Pettis whenever  ${\underset{n\longrightarrow +\infty}{\lim}} \|T(x_n)\| = 0$  for every weakly null sequence $(x_{n})$ in $X$.
 \item An operator  $T:E\longrightarrow Y$ is said to be almost Dunford-Pettis whenever  ${\underset{n\longrightarrow +\infty}{\lim}} \|T(x_n)\| = 0$  for every disjoint weakly null sequence $(x_{n})$ in $E$.

\item An operator  $T: E \longrightarrow Y$  is said to be unbounded absolutely weak Dunford–Pettis operator (add, uaw-Dunford–Pettis) if for every norm bounded sequence $x_n$ in $E$, $x_{n} \overset{uaw}\longrightarrow 0$  implies $||Tx_{n}|| \longrightarrow 0$.

\item An operator $T : X \longrightarrow F$, from a Banach space into a Riesz space is said to be order (L)-Dunford-Pettis if its adjoint $T'$ carries each order bounded subset of $F'$ into an (L)-set in $X'$.
\end{itemize}

\section{Main results}

%%%%%%%%%%%%%%%%%%%%%%%%%%%%%%%%%%%%%%%%%%%
Following [\cite{chen19}, Theorem 3.1] a non-empty bounded subset $A$ of a Banach lattice $E$ is L-weakly compact if, and only if, ${\underset{x\in A}{\sup}}|f_n(x)|\longrightarrow 0$ for every norm bounded ua-weakly null sequence $(f_n)$ in $E'$. Also, we note that every disjoint sequence of a Banach lattice is uaw-null [\cite{Zabeti}, Lemma 2]. On the other hand, it is clear that a Banach lattice $E$ is order continuous if, and only if, every order interval of $E$ is L-weakly compact.

For the next results, we need the following Lemma;
\begin{lemma}[Corollary 2.6\cite{Dodds}]\label{lemma1}\quad \\
	Let $E$ be a Banach lattice and  $(x_n)$ a sequence of $E$. The following statements are equivalents:
		\begin{enumerate}
			\item $||x_n||\longrightarrow 0$.
			\item $|x_n |\overset{w}\longrightarrow 0$ and for every norm bounded disjoint sequence $(f_n)$ of $(E')^{+}$ $f_n(x_n) \longrightarrow 0$.
		\end{enumerate}
\end{lemma}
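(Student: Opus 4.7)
The plan is to dispatch $(1)\Rightarrow(2)$ directly and $(2)\Rightarrow(1)$ by contradiction through a double disjointification in the lattice. The direction $(1)\Rightarrow(2)$ is immediate: from $\|x_n\|=\||x_n|\|$ one gets norm --- hence weak --- convergence of $(|x_n|)$ to $0$, and $|f_n(x_n)|\le\|f_n\|\,\|x_n\|\to 0$ handles the second clause for any norm bounded sequence $(f_n)$.

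For $(2)\Rightarrow(1)$, suppose for contradiction that $\|x_n\|\not\to 0$. After passing to a subsequence there is $\epsilon>0$ with $\||x_n|\|\ge 2\epsilon$ for all $n$. Since $(|x_n|)$ is then a weakly null sequence in $E^+$ whose norms stay bounded away from $0$, a classical Kadec--Pelczynski-type disjoint sequence lemma produces a subsequence $(n_k)$ and a pairwise disjoint sequence $(z_k)\subset E^+$ with $z_k\le |x_{n_k}|$ and $\|z_k\|\ge \epsilon$. Using Hahn--Banach together with the identity $\|z_k\|=\sup\{f(z_k):f\in (E')^+,\,\|f\|\le 1\}$, choose $g_k\in (E')^+$ with $\|g_k\|\le 1$ and $g_k(z_k)\ge\epsilon/2$. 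Since the $z_k$ are pairwise disjoint in $E$, one can then project each $g_k$ onto the band generated by $z_k$ (working in the Dedekind completion of $E'$ if necessary) to obtain a bounded pairwise disjoint sequence $(\tilde g_k)\subset (E')^+$ still satisfying $\tilde g_k(z_k)\ge \epsilon/2$.

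From $z_k\le|x_{n_k}|=x_{n_k}^++x_{n_k}^-$ we have $\tilde g_k(x_{n_k}^+)+\tilde g_k(x_{n_k}^-)\ge \epsilon/2$, so a pigeonhole together with a further subsequence lets us assume, say, $\tilde g_k(x_{n_k}^+)\ge \epsilon/4$. Projecting each $\tilde g_k$ once more onto the band generated by $x_{n_k}^+$ (which is disjoint from that of $x_{n_k}^-$) kills the contribution from $x_{n_k}^-$ and produces a bounded pairwise disjoint positive sequence $(h_k)\subset (E')^+$ with $h_k(x_{n_k})\ge\epsilon/4$, contradicting the second clause of $(2)$. The main obstacle is precisely this iterated disjointification --- constructing bounded disjoint positive functionals that still witness a definite portion of each $x_{n_k}$ --- and it is here that the hypothesis $|x_n|\overset{w}{\longrightarrow}0$ enters essentially: it guarantees that for each fixed $f\in (E')^+$ one has $f(|x_m|)\to 0$ as $m\to\infty$, which is what drives the successive diagonal extractions in both the primal and the dual disjointification.
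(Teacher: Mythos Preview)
The paper does not supply its own proof of this lemma: it is quoted verbatim as Corollary~2.6 of Dodds--Fremlin \cite{Dodds} and used as a black box throughout. So there is no in-paper argument to compare against; one can only ask whether your proposal stands on its own, and it essentially does --- it is in fact very close to the original Dodds--Fremlin argument.

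A couple of small clean-ups. First, the parenthetical ``working in the Dedekind completion of $E'$ if necessary'' is never needed: the norm dual of a Banach lattice is always Dedekind complete, so band projections in $E'$ exist outright. Second, ``the band generated by $z_k$'' should be read as the \emph{carrier} band $C_{z_k}=\{f\in E':|f|(z_k)=0\}^{d}$ in $E'$; one then has to check (and it is true) that $z_k\perp z_j$ in $E$ forces $C_{z_k}\perp C_{z_j}$ in $E'$, and that the band projection $P_{C_{z_k}}g_k$ still evaluates to $g_k(z_k)$ on $z_k$. These are routine but are exactly the points where a reader might pause. The second projection step onto $C_{x_{n_k}^+}$ can in fact be avoided: from $\tilde g_k(|x_{n_k}|)\ge\varepsilon/2$ one gets $\phi_k\in[-\tilde g_k,\tilde g_k]$ with $\phi_k(x_{n_k})\ge\varepsilon/2$ by Riesz--Kantorovich, and then $\phi_k^{+}$ and $\phi_k^{-}$ are each positive disjoint bounded sequences (being dominated by the disjoint $\tilde g_k$), so hypothesis~(2) forces both $\phi_k^{\pm}(x_{n_k})\to 0$, a contradiction. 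Your route via a second projection is also correct, just slightly longer.
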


We start by  the following characterization of L-weakly compact sets which is a generalisation of Theorem 3.3 \cite{chen19}.
\begin{proposition}\label{prop02}
Let $E$ be a Banach lattice and $A$ be a norm bounded subset of $E$. The following statements are equivalents:
	\begin{enumerate}
		\item  $A$ is L-weakly compact.
		\item For every norm bounded uaw-null sequence $(f_n)$ in $E'$ and for every  sequence $(x_n)$ in $E^+\cap sol(A)$  we have $f_n(x_n)\longrightarrow 0$.
         \item For every norm bounded uaw-null sequence $(f_n)$ in $E'$ and for every  disjoint sequence $(x_n)$ in $E^+\cap sol(A)$  we have $f_n(x_n)\longrightarrow 0$.
	\end{enumerate}
\end{proposition}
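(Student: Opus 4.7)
The plan is to prove the cycle $(1)\Rightarrow(2)\Rightarrow(3)\Rightarrow(1)$. The step $(2)\Rightarrow(3)$ is immediate since every disjoint sequence in $E^{+}\cap sol(A)$ is in particular such a sequence.

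For $(1)\Rightarrow(2)$, I would pass to the solid hull. Since $A$ is L-weakly compact if and only if $sol(A)$ is (both conditions reduce to the same statement about disjoint sequences in $sol(sol(A))=sol(A)$), applying the characterization of [chen19, Theorem~3.1] recalled at the beginning of this section to $sol(A)$ gives $\sup_{x\in sol(A)}|f_n(x)|\longrightarrow 0$ for every norm bounded uaw-null $(f_n)\subset E'$. Any $(x_n)\subset E^{+}\cap sol(A)$ then satisfies $|f_n(x_n)|\le\sup_{x\in sol(A)}|f_n(x)|\longrightarrow 0$.

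The substantive implication is $(3)\Rightarrow(1)$, which I plan to establish by contrapositive. If $A$ is not L-weakly compact, there is a disjoint $(x_n)\subset sol(A)$ with $\|x_n\|\not\to 0$. Replacing $x_n$ by $|x_n|$ one may assume $(x_n)\subset E^{+}\cap sol(A)$ is disjoint with $\|x_n\|\ge\varepsilon$ for all $n$. Hahn--Banach together with the positivity of the $x_n$ produces $\phi_n\in(B_{E'})^{+}$ with $\phi_n(x_n)\ge\varepsilon/2$. The crucial step is then to replace $(\phi_n)$ by a \emph{disjoint} norm bounded sequence $(f_n)\subset (E')^{+}$ still satisfying $f_n(x_n)\ge\delta>0$; once in hand, $(f_n)$ is uaw-null by the [Zabeti, Lemma~2] quoted in the preamble, and since $(x_n)$ is disjoint in $E^{+}\cap sol(A)$, hypothesis $(3)$ forces $f_n(x_n)\longrightarrow 0$, contradicting $f_n(x_n)\ge\delta$.

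The main obstacle is this disjointification step, which I plan to carry out using the Dedekind completeness of $E'$: for each $n$ the set $\mathcal{K}_n:=\{\psi\in E':|\psi|(x_n)=0\}$ is a band in $E'$, and the band projection of $\phi_n$ onto the disjoint complement $\mathcal{K}_n^{d}$ yields a positive $f_n\le\phi_n$ with $\phi_n-f_n\in\mathcal{K}_n$, so that $f_n(x_n)=\phi_n(x_n)\ge\varepsilon/2$. The disjointness of $(x_n)$ in $E$ is used to show that the bands $\mathcal{K}_n^{d}$ are pairwise disjoint in $E'$, producing the required disjoint sequence $(f_n)$ and completing the contradiction.
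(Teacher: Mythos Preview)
Your proof is correct. For $(1)\Rightarrow(2)$ and $(2)\Rightarrow(3)$ you essentially match the paper, which simply declares these implications obvious (implicitly relying on the characterization from \cite{chen19} applied to $sol(A)$, exactly as you spell out).

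For $(3)\Rightarrow(1)$ the paper takes a different route. Rather than arguing by contrapositive, it proceeds directly: given a disjoint sequence $(x_n)\subset sol(A)$, it verifies the two conditions of the Dodds--Fremlin criterion (Lemma~\ref{lemma1}) for $\|x_n\|\to 0$. The second condition ($f_n(x_n)\to0$ for every norm bounded disjoint $(f_n)\subset(E')^+$) is immediate from (3). For the first ($|x_n|\overset{w}{\to}0$), the paper fixes $f\in E'$ and invokes Exercise~22, p.~77 of \cite{AB} to obtain a disjoint sequence $(g_n)$ with $|g_n|\le|f|$ and $g_n(|x_n|)=f(|x_n|)$; since $(g_n)$ is disjoint it is uaw-null, so (3) yields $f(|x_n|)=g_n(|x_n|)\to0$. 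The common core of both arguments is the same disjointification of functionals adapted to the disjoint sequence $(x_n)$: the paper uses the ready-made version from \cite{AB} (disjointifying a \emph{single} $f$), while you build it by band-projecting each $\phi_n$ onto the carrier band $\mathcal{K}_n^{d}$ in the Dedekind complete dual. Your route is more self-contained, avoiding Lemma~\ref{lemma1} entirely; the paper's is shorter once that lemma and the \cite{AB} exercise are on the table.
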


\begin{proof}
$1)\Longrightarrow 2)$ and $2)\Longrightarrow 3)$ Are obvious.

$3)\Longrightarrow 1)$ Let $(x_n)$ be a disjoint sequence in sol($A$). We have to show that ${\underset{n\longrightarrow +\infty}{\lim}} \|x_n\| = 0$. Indeed,
\begin{itemize}
\item Let $f\in E'$, then it follows from the Exercise 22 page 77 \cite{AB} that there exists a disjoint sequence $(g_n)$ of $E'$ such that $|g_n|\leq |f|$ and $g_n(|x_n|)=f(|x_n|)$, we note that $(g_n)$ is norm bounded. As $(g_n)$ is disjoint, then $(g_n)$ is a norm bounded uaw-null sequence and hence by our hypothesis ${\underset{n\longrightarrow +\infty}{\lim}}g_n(|x_n|)=0$ which implies that ${\underset{n\longrightarrow +\infty}{\lim}}f(|x_n|)=0$, that is $(|x_n|)$ is a weakly null sequence.
\item Let $(f_n)$ be a norm bounded disjoint sequence of $(E')^+$, then $(f_n)$ is a norm bounded uaw-null and hence by our hypothesis we have ${\underset{n\longrightarrow +\infty}{\lim}}f_n(x_n)=0$.
\end{itemize}
The conclusion follows from Lemma \ref{lemma1}.
\end{proof}
%%%%%%%%%%%%%%%%%%%%%%%%%%%%%%%%%%%%%%%%%%%%%%%%%%%%
%%%%%%%%%%%%%%%%%%%%%%%%%%%%%%%%%%%%%%%%%%%%%%%%%%%%
%%%%%%%%%%%%%%%%%%%%%%%%%%%%%%%%%%%%%%%%%%%%%%%%%%%%
In the following result, we give a generalization of Theorem 4.34 \cite{AB}.
\begin{theorem}\label{theo0}
If $W$ is a weakly relatively compact subset of a Banach lattice, then every uaw-null sequence $(x_n)\subset sol(W)$ converge weakly to zero.
\end{theorem}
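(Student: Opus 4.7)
My plan is to argue by contradiction, reducing the conclusion to Theorem 4.34 of \cite{AB} by extracting from $(x_n)$ a disjoint subsequence that lies in $sol(W)$ and on which a suitable positive functional is bounded below.

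Suppose, for a contradiction, that $(x_n)$ does not converge weakly to zero. Passing to a subsequence, there exist $f\in E'$ and $\varepsilon>0$ with $|f(x_n)|\geq \varepsilon$ for every $n$. Since $|x_n|\in sol(W)$, the sequence $(|x_n|)$ remains uaw-null, and $|f(x_n)|\leq |f|(|x_n|)$, we may replace $x_n$ by $|x_n|$ and $f$ by $|f|$, and henceforth assume $x_n\in E^+$, $f\in (E')^+$, and $f(x_n)\geq \varepsilon$ for every $n$.

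The core step is the inductive construction of indices $n_1<n_2<\cdots$ and of a disjoint positive sequence $(y_k)$ in $E$ with $y_k\leq x_{n_k}$ and $f(y_k)\geq \varepsilon/2$. Starting from $n_1=1$ and $y_1=x_1$, and having chosen $y_1,\ldots,y_k$, set $u_k:=y_1+\cdots+y_k\in E^+$; the uaw-hypothesis applied at $u_k$ (and at its scalar multiples) yields $f(x_n\wedge (mu_k))\longrightarrow 0$ as $n\to\infty$ for every $m\in\mathbb N$. Choosing $m=m_k$ large and then $n_{k+1}>n_k$ large, we extract a positive element $y_{k+1}\leq x_{n_{k+1}}$ disjoint from $u_k$ with $f(y_{k+1})\geq \varepsilon/2$; this is the main technical step. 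Since $y_k\leq x_{n_k}\in sol(W)$ and $sol(W)$ is solid, the resulting disjoint sequence $(y_k)$ belongs to $sol(W)$, so Theorem 4.34 of \cite{AB} forces $y_k\overset{w}{\longrightarrow} 0$, and in particular $f(y_k)\longrightarrow 0$, contradicting $f(y_k)\geq\varepsilon/2$.

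The principal difficulty is the disjointification: producing an actual $y_{k+1}\in E$ that is bounded by $x_{n_{k+1}}$, disjoint from $u_k$, and still carries sufficient $f$-mass. In a Dedekind complete Banach lattice this is immediate via the band projection onto the band generated by $u_k$, provided one can arrange that $f$ of the band-projected part is small; note that the uaw-hypothesis controls only $f(x_n\wedge(mu_k))$ for fixed $m$, whereas the band projection is $P_{u_k}x=\sup_m x\wedge(mu_k)$, so some care in the order of choosing $m_k$ and $n_{k+1}$ is required. In the general Banach lattice case one further loses the band projection in $E$, and has to work with the increasing approximations $x_{n_{k+1}}\wedge(mu_k)$ and, if needed, pass to the Dedekind completion or the bidual $E''$ and transfer the resulting decomposition back to $E$.
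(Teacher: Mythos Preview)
Your approach is genuinely different from the paper's and, as written, has a real gap precisely at the point you yourself flag as ``the main technical step''.

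The paper does not argue by contradiction and does not invoke Theorem~4.34 of \cite{AB} at all. It proceeds directly via Theorem~4.37 of \cite{AB}: given $f\in E'$ and $\varepsilon>0$, that theorem produces a single $u\ge 0$ (in the ideal generated by $W$) with $|f|\bigl((|x_n|-u)^+\bigr)<\varepsilon/2$ \emph{uniformly in $n$}. Since $|f|(|x_n|)=|f|\bigl((|x_n|-u)^+\bigr)+|f|(|x_n|\wedge u)$ and the uaw-hypothesis makes the second term eventually $<\varepsilon/2$, the result follows in two lines.

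In your scheme, the obstacle is exactly the one you identify: the uaw-hypothesis gives $f(x_n\wedge m u_k)\to 0$ for each fixed $m$, but the band projection satisfies $f(P_{u_k}x_n)=\sup_m f(x_n\wedge m u_k)$, and nothing in your assumptions forces this supremum to be small along a subsequence. Choosing $m_k$ first and $n_{k+1}$ afterwards does not produce an element disjoint from $u_k$: the candidate $(x_{n_{k+1}}-m_k u_k)^+$ satisfies $f\bigl((x_{n_{k+1}}-m_k u_k)^+\bigr)\ge \varepsilon/2$, but it need not be disjoint from $u_k$, so the inductive step does not close. Passing to the Dedekind completion or to $E''$ does give you band projections, but it does not remove the problem: you still need $f(P_{u_k}x_n)$ small for some $n$, and that is exactly the uncontrolled supremum above. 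Transferring a decomposition from $E''$ back to $E$ is an additional difficulty you leave entirely unaddressed.

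In short, what makes the paper's argument work is the uniform-in-$n$ element $u$ supplied by Theorem~4.37; this is precisely the ingredient that your disjointification would need and does not have. If you want to keep the reduction to Theorem~4.34, you would essentially have to reprove the content of Theorem~4.37 along the way, which defeats the purpose. The clean fix is to abandon the contradiction/disjointification route and use Theorem~4.37 directly.
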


\begin{proof}
Let $(x_n)$ be an uaw-null sequence of $sol(W)$ and let $\varepsilon>0$ and $f\in E'$. It follows from Theorem 4.37 \cite{AB}  that there exists $u\geq 0$ lying in the ideal generated by $W$ such that $|f|((|x|-u)^+)<\frac{\varepsilon}{2}$ for all $n\in \mathbb{N}$. Since $$|f|(|x_n|)=|f|\left((|x_n|-u)^{+} +|x_n|\wedge u\right)=|f|((|x_n|-u)^{+}) +|f|(|x_n|\wedge u)$$ for all $n$, and since $x_n{\overset{uaw}{\longrightarrow}}0$ then there exists $n_0\in \mathbb{N}$ such that for all $n\geq n_0$ we have $|f|(|x_n|\wedge u)<\frac{\varepsilon}{2}$.So,for all $n\geq n_0$ we have $|f(x_n)|\leq \varepsilon$. That is $x_n{\overset{w}{\longrightarrow}}0$.
\end{proof}
%As a direct consequence of Theorem \ref{theo0}, we fined the Theorem 4.34 \cite{AB}.

In the following result, we fined an other characterization of L-weakly compact sets.
\begin{proposition}\label{prop0}
For a bounded subset $A$ of a Banach lattice $E$, the following statements are equivalents:
\begin{enumerate}
		\item  $A$ is L-weakly compact.
		\item For every uaw-null sequence $(x_n)\subset sol(A)$ we have $\lim_{n\longrightarrow +\infty} \|x_n\|=0$.
	\end{enumerate}
\end{proposition}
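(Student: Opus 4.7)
The plan is to handle the two implications separately and rely on the machinery already assembled in the excerpt: Lemma \ref{lemma1}, Proposition \ref{prop02}, Theorem \ref{theo0}, and the fact (cited from \cite{Zabeti}) that norm bounded disjoint sequences are uaw-null.

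For the direction $(2)\Longrightarrow(1)$, I would simply observe that any disjoint sequence in $sol(A)$ is a disjoint sequence in the Banach lattice $E$, hence uaw-null by \cite{Zabeti}, Lemma 2; condition (2) then gives the norm convergence to zero required in the definition of L-weakly compactness.

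For $(1)\Longrightarrow(2)$, let $(x_n)\subset sol(A)$ be uaw-null. The strategy is to verify the two conditions in Lemma \ref{lemma1} for the sequence $(x_n)$. \emph{Weak convergence of $(|x_n|)$:} since $A$ is L-weakly compact, so is $sol(A)$ (because $sol(sol(A))=sol(A)$), and L-weakly compact subsets are relatively weakly compact. The sequence $(|x_n|)$ still lies in $sol(A)$, and it is uaw-null because $\bigl||x_n|\bigr|\wedge u=|x_n|\wedge u$ converges weakly to $0$ for each $u\in E^+$. Hence Theorem \ref{theo0} applies to $sol(A)$ and yields $|x_n|\overset{w}{\longrightarrow}0$. \emph{Disjoint dual test:} let $(f_n)$ be a norm bounded disjoint sequence in $(E')^+$. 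By \cite{Zabeti}, Lemma 2 the sequence $(f_n)$ is uaw-null in $E'$, and $(|x_n|)$ lies in $E^+\cap sol(A)$, so Proposition \ref{prop02} (the implication $(1)\Longrightarrow(2)$ applied to the L-weakly compact set $A$) delivers $f_n(|x_n|)\longrightarrow 0$, and therefore $|f_n(x_n)|\leq f_n(|x_n|)\longrightarrow 0$. With both conditions of Lemma \ref{lemma1} verified, we conclude $\|x_n\|\longrightarrow 0$.

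The only step that requires external knowledge is that an L-weakly compact set is relatively weakly compact (so that Theorem \ref{theo0} is applicable to $sol(A)$); this is a standard fact from the theory of L-weakly compact sets (Meyer-Nieberg). Everything else is a bookkeeping exercise gluing together the previously stated results, so I do not expect a real obstacle beyond invoking this standard embedding of L-weakly compactness into weak relative compactness.
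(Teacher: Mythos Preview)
Your proposal is correct and follows essentially the same route as the paper: both directions are handled exactly as the paper does, namely $(2)\Rightarrow(1)$ via the fact that disjoint sequences are uaw-null, and $(1)\Rightarrow(2)$ by verifying the two hypotheses of Lemma~\ref{lemma1} using Theorem~\ref{theo0} for $|x_n|\overset{w}{\longrightarrow}0$ and the uaw-characterization of L-weak compactness for the disjoint dual test. If anything, you are slightly more careful than the paper in spelling out why $(|x_n|)$ is itself uaw-null in $sol(A)$ and in passing through $f_n(|x_n|)$ to invoke Proposition~\ref{prop02}.
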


\begin{proof}
$1)\Longrightarrow 2)$ Let $(x_n)$ be an uaw-null sequence of $sol(A)$. As $A$ is L-weakly compact, then $A$ is relatively weakly compact and hence by Theorem \ref{theo0} we have $|x_n|{\overset{w}{\longrightarrow}}0.$ On the other hand, let $(f_n)$ be a norm bounded disjoint sequence of $(E')^+$. We will show that $f_n(x_n)\longrightarrow 0$. Since $(f_n)$ is a disjoint sequence then $f_n{\overset{uaw}{\longrightarrow}}0$ and since $A$ is L-weakly compact then $f_n(x_n)\longrightarrow 0$. So, by lemma \ref{lemma1} we infer that $\lim_{n\longrightarrow +\infty} \|x_n\|=0$.

$2)\Longrightarrow 1)$ Follows from the fact that every disjoint sequence in uaw-null.
\end{proof}

As consequence of Theorem \ref{theo0} and Proposition \ref{prop0}, we fined the following result which is exactly the Corollary 3.6.8 \cite{Mey}.
\begin{corollary}\label{coro0}
For a Banach lattice $E$, the following statements are equivalents:
\begin{enumerate}
		\item  $E$ has the positive Schur property.
		\item Every relatively weakly compact subset $A$ of $E$ is L-weakly compact.
	\end{enumerate}
\end{corollary}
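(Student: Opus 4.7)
The plan is to derive both implications as quick corollaries of Proposition~\ref{prop0}, which characterizes L-weak compactness of $A$ through norm-nullity of uaw-null sequences in $sol(A)$, combined with Theorem~\ref{theo0}.

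For $(1)\Rightarrow(2)$, I would fix a relatively weakly compact subset $A$ of $E$. By Proposition~\ref{prop0}(2), it suffices to show that $\|x_n\|\to 0$ for every uaw-null sequence $(x_n)\subset sol(A)$. Since $sol(A)$ is solid, the sequence $(|x_n|)$ also lies in $sol(A)$ and remains uaw-null, so Theorem~\ref{theo0} applied to $W=A$ yields $|x_n|\overset{w}{\longrightarrow}0$. Because $|x_n|\ge 0$, the positive Schur property then forces $\|x_n\|=\||x_n|\|\to 0$.

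For $(2)\Rightarrow(1)$, I would take a positive weakly null sequence $(x_n)$ in $E$ and set $A=\{x_n:n\in\mathbb{N}\}\cup\{0\}$. This $A$ is relatively weakly compact, hence L-weakly compact by hypothesis. The key intermediate step is to check that $(x_n)$ is uaw-null: for each $u\in E^+$ we have $0\le x_n\wedge u\le x_n$, so $f(x_n\wedge u)\le f(x_n)\to 0$ for every $f\in (E')^+$; decomposing an arbitrary $f\in E'$ as $f^+-f^-$ propagates this to all of $E'$, whence $x_n\wedge u\overset{w}{\longrightarrow}0$. Since $(x_n)\subset A\subset sol(A)$ is uaw-null and $A$ is L-weakly compact, Proposition~\ref{prop0} delivers $\|x_n\|\to 0$.

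The only delicate point is the observation that a \emph{positive} weakly null sequence is automatically uaw-null; weak convergence does not imply uaw-convergence in general, so positivity of the sequence is essential at this step. Once this is noted, no further obstacle remains, and both implications follow directly from Proposition~\ref{prop0} and Theorem~\ref{theo0}.
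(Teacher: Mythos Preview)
Your proof is correct and follows exactly the route the paper indicates: the paper presents this corollary without a detailed argument, simply as a consequence of Theorem~\ref{theo0} and Proposition~\ref{prop0}, and your two implications spell out precisely how those results combine. The only thing you add beyond what the paper sketches is the verification that a positive weakly null sequence is automatically uaw-null, which is indeed the small but necessary observation making the $(2)\Rightarrow(1)$ direction go through via Proposition~\ref{prop0}.
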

%%%%%%%%%%%%%%%%%%%%%%%%%%%%%%%%%%%%%%%%%%%%%%%%%%%%%%%%%%%%
%%%%%%%%%%%%%%%%%%%%%%%%%%%%%%%%%%%%%%%%%%%%%%%%%%%%%%%%%%%%
Other characterizations of L-weakly compact sets are obtained in the following result.
\begin{proposition}\label{prop21}
For an operator $T$ between tow Banach lattice $E$ and $F$, the following statements are equivalents:
\begin{enumerate}
%\item $T : E \longrightarrow X$ is an order L-weakly compact operator.
\item $T[-x,x]$ is an  L-weakly compact subset of $F$ for every $x\in E^+$.
\item For every norm bounded uaw-null sequence $(f_n)$ of $F'$, we have $|T'(f_n)| \overset{w^*}\longrightarrow 0$.
\item For every order bounded sequence $(x_n)\subset E$ and for every norm bounded uaw-null sequence $(f_n)$ of $F'$, we have ${\underset{n\longrightarrow +\infty}{\lim}} f_n(T(x_n))= 0$.
\end{enumerate}
\end{proposition}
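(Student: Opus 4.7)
The plan is to establish the chain $(1)\Rightarrow(2)\Rightarrow(3)\Rightarrow(1)$, leaning on two tools. The first is the Riesz--Kantorovich identity
\[
|T'(f)|(x) \;=\; \sup_{|y|\leq x}|f(Ty)| \;=\; \sup_{z\in T[-x,x]}|f(z)|, \qquad x\in E^+,\ f\in F',
\]
which identifies the modulus of the adjoint on the positive cone with a supremum over $T[-x,x]$. The second is the characterisation of L-weak compactness recalled from [\cite{chen19}, Theorem 3.1] at the start of Section~3: a bounded $A\subset F$ is L-weakly compact iff $\sup_{z\in A}|f_n(z)|\to 0$ for every norm bounded uaw-null sequence $(f_n)\subset F'$.

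For $(1)\Rightarrow(2)$, fix $x\in E^+$ and a norm bounded uaw-null $(f_n)\subset F'$. The Riesz--Kantorovich identity rewrites $|T'(f_n)|(x)$ as $\sup_{z\in T[-x,x]}|f_n(z)|$, which tends to $0$ by the recalled characterisation applied to the L-weakly compact set $T[-x,x]$. Since the functionals $|T'(f_n)|$ are positive and norm bounded, pointwise convergence to $0$ on $E^+$ lifts to weak-$*$ convergence on all of $E$. For $(2)\Rightarrow(3)$, if $(x_n)\subset E$ is order bounded, say $|x_n|\leq x$, then
\[
|f_n(Tx_n)| = |T'(f_n)(x_n)| \leq |T'(f_n)|(|x_n|) \leq |T'(f_n)|(x) \longrightarrow 0,
\]
which is immediate from (2).

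For $(3)\Rightarrow(1)$, fix $x\in E^+$ and a norm bounded uaw-null $(f_n)\subset F'$; I verify the recalled supremum characterisation for the set $T[-x,x]$. Using Riesz--Kantorovich again, the quantity $\sup_{z\in T[-x,x]}|f_n(z)|=\sup_{|y|\leq x}|f_n(Ty)|$ is finite since $T$ is bounded, so one may choose for each $n$ some $y_n\in E$ with $|y_n|\leq x$ and $|f_n(Ty_n)|\geq \sup_{|y|\leq x}|f_n(Ty)|-1/n$. The sequence $(y_n)$ is order bounded by $x$, and hypothesis (3) forces $f_n(Ty_n)\to 0$; consequently $\sup_{z\in T[-x,x]}|f_n(z)|\to 0$, and $T[-x,x]$ is L-weakly compact. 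The only substantive moment is the near-maximiser selection in this last implication combined with the twin invocations of the Riesz--Kantorovich formula; no further delicate manoeuvre is required.
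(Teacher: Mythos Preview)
Your argument is correct and follows the same route as the paper: the key identity $|T'(f_n)|(x)=\sup_{z\in T[-x,x]}|f_n(z)|$ together with the uaw-characterisation of L-weak compactness from \cite{chen19} drives $(1)\Rightarrow(2)$ exactly as in the paper. The paper then declares the remaining implications obvious (with a typo, writing ``$2)\Longrightarrow 3)$ and $2)\Longrightarrow 3)$''), whereas you spell out $(2)\Rightarrow(3)$ via the estimate $|f_n(Tx_n)|\le |T'(f_n)|(x)$ and $(3)\Rightarrow(1)$ via a near-maximiser selection; both fillings are clean and correct.
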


 \begin{proof}
 $1)\Longrightarrow 2)$	Let $x\in E^{+}$. Since $T\left( \left[-x,x \right]\right)$ is an L-weakly compact set, then for every norm bounded sequence  $(f_n)$ of $F'$ with $f_n \overset{uaw}\longrightarrow 0$ we have  $\sup_{y\in T\left[ -x,x\right] }|f_n(y)|\longrightarrow 0$ and hence
$$|T'(f_n)|(x)=\sup_{z\in \left[ -x,x\right] }|T'(f_n)(z)|=\sup_{y\in T\left[ -x,x\right] }|f_n(y)|\longrightarrow 0.$$
That is $|T'(f_n)| \overset{w^*}\longrightarrow 0$.\\
$2)\Longrightarrow 3)$ and $2)\Longrightarrow 3)$  are Obvious.
\end{proof}

%\begin{remark}
%If an operator $T$ bijective positif a Banach lattice $\sigma$-Dedekind $E$ into a Banach lattice $F$ and The lattice operation of $E'$ are *weakly %sequentially continuous, then
% $T$ order ua-DP $\iff$ $T$ order weakly compact.
%\end{remark}	

As consequence of Proposition \ref{prop21} we have the following result.
\begin{corollary}
	Let $E$ be a Banach lattice. Then, the following statements are equivalents:
	\begin{enumerate}
		\item For each $x \in E^{+}$, $\left[-x,x \right]$  is an L-weakly compact set.
		\item For every norm bounded uaw-null sequence $(f_n)$ of $X'$, $|f_n |\overset{w^*}\longrightarrow 0$.
        \item $E$ is order continuous.
	\end{enumerate}
\end{corollary}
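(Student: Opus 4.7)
The plan is to deduce both equivalences from results already in place. The equivalence (1) $\Longleftrightarrow$ (3) is already recorded in the introductory paragraph of Section 3, where the authors note that a Banach lattice is order continuous if and only if each of its order intervals is L-weakly compact, so nothing new is needed there.

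For (1) $\Longleftrightarrow$ (2), I would specialize Proposition \ref{prop21} to the identity operator $T=\mathrm{Id}_E:E\to E$ (taking $F=E$). With this choice the set $T[-x,x]=[-x,x]$ and the adjoint acts as $T'(f_n)=f_n$, so conditions (1) and (2) of Proposition \ref{prop21} translate verbatim into conditions (1) and (2) of the present corollary. In particular the implication $1)\Longrightarrow 2)$ of Proposition \ref{prop21} already supplies (1) $\Longrightarrow$ (2).

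For the reverse direction (2) $\Longrightarrow$ (1) the key ingredient is the Riesz--Kantorovich formula
\[
|f|(x)=\sup_{y\in[-x,x]}|f(y)|\qquad (f\in E',\ x\in E^+).
\]
Given a norm bounded uaw-null sequence $(f_n)$ in $E'$, the hypothesis $|f_n|\overset{w^{*}}{\longrightarrow}0$ applied to an arbitrary $x\in E^+$ yields $\sup_{y\in[-x,x]}|f_n(y)|=|f_n|(x)\to 0$. Invoking the Chen--Li characterisation (Theorem 3.1 of \cite{chen19}) that is recalled at the start of Section 3, this tells us that the order interval $[-x,x]$ is L-weakly compact for every $x\in E^+$, which is (1).

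None of the steps is delicate: the proof really amounts to assembling three facts already present in the paper (the order-continuity characterisation, Proposition \ref{prop21}, and the Chen--Li criterion) and applying the Riesz--Kantorovich identity as the bridge between $|f_n|\overset{w^*}{\longrightarrow}0$ and the supremum expression required by the criterion. The only point that requires a moment's care is recognising that the symbol $X'$ in item (2) of the statement should read $E'$, so that the duality identity is legitimately available.
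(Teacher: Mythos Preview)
Your proposal is correct and follows the same approach as the paper: the corollary is stated there simply ``as a consequence of Proposition~\ref{prop21}'', and your specialization $T=\mathrm{Id}_E$ together with the order-continuity remark from the opening of Section~3 is exactly the intended reading. The extra detail you supply for $(2)\Rightarrow(1)$ via the Riesz--Kantorovich identity and the Chen--Li criterion just makes explicit the direction that Proposition~\ref{prop21}'s proof leaves to the reader.
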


%The following result is a generalisation of the first part of Theorem 3.3 \cite{chen19}.
If in addition in the Proposition \ref{prop21}, $T$ is a positive operator then we have the following equivalences.
\begin{theorem}\label{theorem01}
Let $T:E \longrightarrow F$ be a positive operator between two Banach lattices $E$ and $F$. The following statements are equivalents:
	\begin{enumerate}
		\item  $T[-x,x]$ is an L-weakly compact subset of $F$ for every $x\in E^+$.
		\item For every limited subset $A$ of $E$, $T(A)$ is L-weakly compact subset of $F$.
       \item For every relatively compact subset $A$ of $E$, $T(A)$ is an L-weakly compact subset of $F$.
	\end{enumerate}
\end{theorem}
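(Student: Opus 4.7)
The plan is to prove the cycle $(1)\Rightarrow(2)\Rightarrow(3)\Rightarrow(1)$.

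The step $(2)\Rightarrow(3)$ is immediate: any norm relatively compact subset of a Banach space is limited, because a norm bounded sequence in the dual that converges pointwise to zero converges uniformly on compact sets, by equicontinuity (Arzel\`a--Ascoli).

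For $(1)\Rightarrow(2)$ I would invoke Proposition~\ref{prop21}, which reformulates (1) as: for every norm bounded uaw-null sequence $(g_n)$ in $F'$, $|T'(g_n)|\overset{w^*}{\longrightarrow}0$ in $E'$. Since $|T'(g_n)(x)|\leq|T'(g_n)|(|x|)$ for every $x\in E$, this in turn yields $T'(g_n)\overset{w^*}{\longrightarrow}0$. Now pick any limited $A\subset E$ and such a $(g_n)$; then $(T'(g_n))$ is norm bounded and weak$^*$-null in $E'$, so by the defining property of limited sets, $\sup_{x\in A}|T'(g_n)(x)|=\sup_{y\in T(A)}|g_n(y)|\longrightarrow 0$. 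The characterization of L-weakly compact sets from [\cite{chen19}, Theorem 3.1] recalled at the start of Section~3 then gives that $T(A)$ is L-weakly compact.

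The essential step is $(3)\Rightarrow(1)$. Fix $x\in E^+$. Since the singleton $\{x\}$ is norm relatively compact, hypothesis (3) says that $\{Tx\}$ is L-weakly compact in $F$. Because $T$ is positive, $Tx\geq 0$, so $sol(\{Tx\})=[-Tx,Tx]$. Moreover, for every $z\in[-x,x]$ positivity yields $|Tz|\leq T|z|\leq Tx$, hence $T[-x,x]\subset[-Tx,Tx]$ and therefore $sol(T[-x,x])\subset sol(\{Tx\})$. Every disjoint sequence in $sol(T[-x,x])$ is then a disjoint sequence in $sol(\{Tx\})$ and must be norm-null by the L-weak compactness of $\{Tx\}$. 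Consequently $T[-x,x]$ is L-weakly compact, establishing~(1).

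The only step that looks substantial in advance, namely $(3)\Rightarrow(1)$, is disarmed by the observation that applying (3) to a single point $\{x\}$ already forces L-weak compactness of the entire order interval $[-Tx,Tx]$ containing $T[-x,x]$. The remaining work is routine, with Proposition~\ref{prop21} carrying the weight of the dual-side translation in $(1)\Rightarrow(2)$.
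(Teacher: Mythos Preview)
Your proof is correct and follows the same cycle $(1)\Rightarrow(2)\Rightarrow(3)\Rightarrow(1)$ as the paper, with the first two implications handled essentially identically (the paper uses the sequential form of limitedness and cites Proposition~3.2 of \cite{chen19} rather than Theorem~3.1, a cosmetic difference). For $(3)\Rightarrow(1)$, however, your argument is more direct: the paper verifies condition~(3) of Proposition~\ref{prop21} by showing $|f_n|(Tx)\to 0$ via the uaw-characterization of the L-weakly compact set $\{Tx\}$ and then bounding $|f_n(Tx_n)|\leq |f_n|(T|x_n|)\leq|f_n|(Tx)$, whereas you bypass the dual side entirely and argue from the definition of L-weak compactness together with the solid-hull inclusion $sol(T[-x,x])\subset sol(\{Tx\})=[-Tx,Tx]$. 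Both arguments hinge on the same key observation---that applying~(3) to the singleton $\{x\}$ already forces L-weak compactness of $[-Tx,Tx]$---but yours reaches the conclusion without invoking Proposition~\ref{prop21} or the results from \cite{chen19}, which makes the implication self-contained.
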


\begin{proof}
$1)\Longrightarrow 2)$ Let $A$ be a limited subset of $E$, $(x_n)\subset A$ and let $(f_n)$ be a norm bounded uaw-null sequence of $F'$. By Proposition 3.2 \cite{chen19} it suffice to show that ${\underset{n\longrightarrow +\infty}{\lim}}f_n(T(x_n))=0$. Since $T[-x,x]$ is an  L-weakly compact subset of $F$, then it follows from Proposition \ref{prop21} that $T'(f_n){\overset{w*}{\longrightarrow}} 0$ and hence ${\underset{n\longrightarrow +\infty}{\lim}}T'(f_n)(x_n)=0$ that is ${\underset{n\longrightarrow +\infty}{\lim}}f_n(T(x_n))=0$.

$2)\Longrightarrow 3)$ Obvious.

$3)\Longrightarrow 1)$ Let $(x_n)$ be an order bounded sequence of $E$ and $(f_n)$ be a norm bounded uaw-null sequence of $F'$. By the Proposition \ref{prop21} it suffice to show that ${\underset{n\longrightarrow +\infty}{\lim}}f_n(T(x_n))=0$. Since $(x_n)$ is order bounded, then there exists $x\in E^+$ such that $|x_n|\leq x$ for all $n\in \mathbb{N}$. On the other hand, $(f_n)$ uaw-null implies that $(|f_n|)$ is uaw-null. As the singleton $\{x\}$ is a relatively compact subset of $E$ then from our hypothesis $\{T(x)\}$ is an L-weakly compact subset of $F$. So, by Proposition 3.2 \cite{chen19} we infer
that ${\underset{n\longrightarrow +\infty}{\lim}}|f_n|(T(x))=0$. \\
From the inequality $$|f_n(T(x_n))|\leq |f_n|(T(|x_n|)) \leq |f_n|(T(x))$$
we conclude that  ${\underset{n\longrightarrow +\infty}{\lim}}f_n(T(x_n))=0$ as desired.
\end{proof}

As consequence of Theorem \ref{theorem01}, we have the following result.
\begin{corollary}
Let $E$ be a Banach lattice. Then the following statements are equivalents:
\begin{enumerate}
\item $E$ is order continuous.
\item Every limited subset $A$ of $E$ is  L-weakly compact.
\item Every relatively compact $A$ of $E$ is L-weakly compact.
       \end{enumerate}
\end{corollary}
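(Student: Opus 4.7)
The plan is to obtain this corollary as an immediate application of Theorem \ref{theorem01} to the identity operator $T=\mathrm{Id}_E:E\longrightarrow E$. Since the identity on any Banach lattice is a positive operator, Theorem \ref{theorem01} applies, and each of its three conditions becomes a statement about subsets of $E$ itself rather than about images $T(A)$.

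First I would translate condition (1) of Theorem \ref{theorem01} in the case $T=\mathrm{Id}_E$: it reads ``$[-x,x]$ is an L-weakly compact subset of $E$ for every $x\in E^+$'', that is, every order interval of $E$ is L-weakly compact. By the remark made just before Lemma \ref{lemma1} (``a Banach lattice $E$ is order continuous if, and only if, every order interval of $E$ is L-weakly compact''), this is equivalent to $E$ being order continuous, which is condition (1) of the corollary.

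Next I would observe that condition (2) of Theorem \ref{theorem01} with $T=\mathrm{Id}_E$ reads ``for every limited subset $A$ of $E$, $A$ is an L-weakly compact subset of $E$'', which is exactly condition (2) of the corollary; and similarly condition (3) of Theorem \ref{theorem01} with $T=\mathrm{Id}_E$ is exactly condition (3) of the corollary. The equivalences $(1)\Longleftrightarrow (2)\Longleftrightarrow (3)$ of the corollary therefore follow directly from the equivalences established in Theorem \ref{theorem01}.

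There is no real obstacle here: the proof is a one-line application of the preceding theorem once one notices that $\mathrm{Id}_E$ is positive and invokes the standard characterization of order continuity via L-weak compactness of order intervals. The only point that deserves an explicit line in the writeup is citing that characterization, to justify that condition (1) of the theorem specializes to order continuity of $E$.
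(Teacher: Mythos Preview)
Your proposal is correct and matches the paper's approach exactly: the corollary is stated in the paper as a direct consequence of Theorem \ref{theorem01} (applied to $T=\mathrm{Id}_E$), together with the observation recorded before Lemma \ref{lemma1} that $E$ is order continuous if and only if every order interval is L-weakly compact. No additional argument is needed.
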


%%%%%%%%%%%%%%%%%%%%%%%%%%%%%%%%%%%%%%%%%%%%%%%%%%%%%%%%%%%
%%%%%%%%%%%%%%%%%%%%%%%%%%%%%%%%%%%%%%%%%%%%%%%%%%%%%%%%%%%
The following result is the dual counterpart of Proposition \ref{prop21} and Theorem \ref{theorem01}.
\begin{theorem}\label{adplw}
	Let  $T:E\longrightarrow F$ be an operator between two Banach lattices $E$ and $F$. Then the following statements are equivalents:
	\begin{enumerate}
         \item $T'[-f,f]$ is an  L-weakly compact subset of $E'$ for every $f\in (F')^+$.
         \item For every norm bounded uaw-null sequence $(x_n)$ of $E$, we have $|T(x_n)| \overset{w}\longrightarrow 0$.
          \item For every order bounded sequence $(f_n)\subset F'$ and for every norm bounded uaw-null sequence $(x_n)$ of $E$, we have ${\underset{n\longrightarrow +\infty}{\lim}} f_n(T(x_n))= 0$.
		\item $T'(A)$ is an L-weakly compact subset of $E'$ for every almost (L)-set $A$ of $F'$.
		\item For every almost-Dunford-Pettis operator $S:F\longrightarrow X$ acting between the Banach lattice $F$ and an arbitrary Banach space $X$, the composed operator $S\circ T$ is an M-weakly compact operator.
	\end{enumerate}
\end{theorem}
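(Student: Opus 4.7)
My plan is to establish the five-way equivalence as the dual counterpart of Proposition~\ref{prop21} and Theorem~\ref{theorem01}, obtained by swapping the roles of $T$ and $T'$, of $E$ and $F$, and of the weak and weak-$^*$ topologies. I would prove the cyclic chain $(1) \Rightarrow (2) \Rightarrow (3) \Rightarrow (1)$ mirroring the argument of Proposition~\ref{prop21}, and then add the equivalences $(2) \Leftrightarrow (5)$ and $(1) \Leftrightarrow (4)$.

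For $(1) \Rightarrow (2)$, I fix $f \in (F')^+$ and a norm bounded uaw-null $(x_n) \subset E$. Since $F'$ is Dedekind complete, the Riesz--Kantorovich identity gives
\[ f(|T(x_n)|) = \sup_{g \in [-f,f]} |g(T(x_n))| = \sup_{h \in T'[-f,f]} |h(x_n)|, \]
so it suffices to show the right-hand side tends to zero. Since $T'[-f,f]$ is L-weakly compact in $E'$, I apply the characterization \cite[Theorem~3.1]{chen19} recalled at the beginning of Section~3, viewing $(x_n)$ as a norm bounded uaw-null sequence in $E''$ via the canonical lattice embedding (which preserves uaw-convergence). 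A positive decomposition of an arbitrary $f \in F'$ then gives $|T(x_n)|\overset{w}{\longrightarrow} 0$. The implication $(2) \Rightarrow (3)$ is the estimate $|f_n(T(x_n))| \leq f(|T(x_n)|)$ when $|f_n| \leq f$, and $(3) \Rightarrow (1)$ uses Proposition~\ref{prop0}: given an uaw-null $(h_n) \subset \mathrm{sol}(T'[-f,f])$, selecting $x_n \in B_E$ nearly realizing $\|h_n\|$ and functionals $g_n \in [-f,f]$ with $|T'(g_n)|$ dominating $|h_n|$ reduces the goal to $(3)$ and forces $\|h_n\| \to 0$.

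For $(2) \Leftrightarrow (5)$, a positive-decomposition trick is crucial. In $(2) \Rightarrow (5)$, a disjoint bounded $(z_n) \subset E$ is uaw-null by \cite[Lemma~2]{Zabeti}, so $|T(z_n)| \overset{w}{\longrightarrow} 0$ by $(2)$; both $T(z_n)^{+}$ and $T(z_n)^{-}$ are then positive weakly null, and applying \cite[Theorem~4.34]{AB} to replace each by a disjoint positive sequence up to arbitrarily small norm error, the almost Dunford--Pettis operator $S$ sends them to zero in norm, yielding $\|S(T(z_n))\| \longrightarrow 0$. For $(5) \Rightarrow (2)$, I would use that any almost (L)-sequence $(f_k)$ in $F'$ induces an almost Dunford--Pettis operator $S : F \to c_0$, $y \mapsto (f_k(y))_k$, and argue by contradiction from a hypothetical failure of $(2)$.

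The equivalence $(1) \Leftrightarrow (4)$ is the most delicate part. The direction $(1) \Rightarrow (4)$ follows by applying the chen19 characterization to $T'(A)$ and exploiting the defining property of almost (L)-sets against sequences in the bidual $E''$. The main obstacle is $(4) \Rightarrow (1)$: an order interval $[-f,f] \subset F'$ is not in general an almost (L)-set, so one cannot simply plug $A = [-f,f]$ into (4). I would circumvent this by routing $(4) \Rightarrow (2) \Rightarrow (1)$, deriving $(2)$ from $(4)$ through carefully chosen almost (L)-set witnesses built from the given uaw-null sequence, together with the chen19 criterion applied in the bidual.
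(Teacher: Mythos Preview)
Your plan contains a concrete misconception that derails the hardest part of the argument. You write that ``an order interval $[-f,f]\subset F'$ is not in general an almost (L)-set, so one cannot simply plug $A=[-f,f]$ into (4).'' This is false: every order interval in $F'$ \emph{is} an almost (L)-set. Indeed, for $g\in[-f,f]$ one has $\sup_{g\in[-f,f]}|g(y_n)|=f(|y_n|)$, and if $(y_n)\subset F$ is disjoint and weakly null then $\{y_n\}\cup\{0\}$ is relatively weakly compact, so Theorem~4.34 of \cite{AB} gives $|y_n|\overset{w}{\longrightarrow}0$ and hence $f(|y_n|)\to 0$. Thus $(4)\Rightarrow(1)$ is immediate, and your proposed detour ``$(4)\Rightarrow(2)\Rightarrow(1)$ through carefully chosen almost (L)-set witnesses'' is unnecessary.

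There is a second gap in your $(1)\Rightarrow(2)$: you invoke \cite[Theorem~3.1]{chen19} for the set $T'[-f,f]\subset E'$, which would require testing against uaw-null sequences in $(E')'=E''$, and you assert without proof that the canonical embedding $E\hookrightarrow E''$ preserves uaw-convergence. That is not obvious (the bidual has many more test vectors $u$). The paper sidesteps this by using instead \cite[Proposition~3.2]{chen19}, which characterizes L-weak compactness of subsets of $E'$ directly via norm bounded uaw-null sequences in $E$; with that tool the Riesz--Kantorovich identity you wrote down finishes the job immediately.

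Finally, your route through $(2)\Leftrightarrow(5)$ and $(1)\Leftrightarrow(4)$ separately is more laborious than the paper's single cycle $2\Rightarrow 4\Rightarrow 5\Rightarrow 3$. For $2\Rightarrow 4$ the paper uses that the solid hull of an almost (L)-set is again almost (L) \cite[Theorem~3.11]{palermo}, and for $5\Rightarrow 3$ it builds, from an order bounded sequence $(f_n)\subset F'$, the almost Dunford--Pettis operator $S:F\to\ell^\infty$, $y\mapsto(f_n(y))_n$, then applies the hypothesis and \cite[Theorem~4.5]{chen19} to get $\|S\circ T(x_n)\|\to 0$. Your sketch of $(5)\Rightarrow(2)$ via an operator into $c_0$ and a contradiction is too vague to assess, whereas the paper's $5\Rightarrow 3$ is short and direct.
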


\begin{proof}
$1)\Longleftrightarrow 2)\Longleftrightarrow 3)$ The proof is similar to that of Proposition \ref{prop21}.

$2)\Longrightarrow 4)$ By the Proposition 3.2 \cite{chen19} it suffice to prove that $T'(f_n)(x_n)\longrightarrow 0$ for every sequence  $(f_n)\subset A$ and every norm bounded uaw-null sequence $(x_n)$ of $E$. Indeed, let $(f_n)\subset A$ and  $(x_n)$ a norm bounded uaw-null sequence of $E$. It follows from our hypothesis that $|T(x_n)| \overset{w}\longrightarrow 0$ and since $A$ is an almost(L)-set then by Theorem 3.11 \cite{palermo} we have $sol(A)$ is an almost (L)-set and $|f_n(T(x_n))| \leq |f_n||T(x_n)| \leq   {\underset{g\in sol(A)}{\sup}}\{g|T(x_n)|\}\longrightarrow 0$ as desired.

$4)\Longrightarrow 5)$ Obvious.

$5)\Longrightarrow 3)$ Let $(x_n)$ be a norm bounded uaw-null sequence of $E$ and $(f_n)$ an order bounded sequence of $F'$. \\
We consider the operator
$$ \begin{array}{lrcl}
S : & F & \longrightarrow &  \ell^{\infty} \\
&x & \longmapsto &  (f_n(x)) \end{array}.$$
It is clear that the operator $S$ is will defined and is almost Dunford-Pettis. Indeed, since $(f_n)$ is order bounded then there exists $f\in (F')^+$ such that $|f_n|\leq f$ for all $n\in \mathbb{N}$. On the other hand, let $(x_n)$ be a positive weakly null sequence of $F$ then
$$\|S(x_m)\|={\underset{n}{\sup}}\{|f_n(x_m)|\}\leq f(x_n)\longrightarrow 0.$$
So, the operator $S$ is almost Dunford-Pettis and from our hypothesis $S\circ T$ is an M-weakly compact operator. By Theorem 4.5 \cite{chen19} we conclude that ${\underset{n\longrightarrow +\infty}{\lim}} \|S\circ T(x_n)\| = 0$ and hence $$|f_n(T(x_n))|\leq \|S\circ T(x_n)\|\longrightarrow 0.$$
\end{proof}

As consequence of Theorem \ref{adplw} we have the following result which generalize the result given in Theorem 7 \cite{Zabeti}.

\begin{corollary}
Let $E$ be a Banach lattice. Then the following statements are equivalents:
\begin{enumerate}
       \item $E'$ is order continuous.
         \item For every norm bounded uaw-null sequence $(x_n)$ of $E$, we have $|x_n| \overset{w}\longrightarrow 0$.
         \item For every order bounded sequence $(f_n)\subset E'$ and for every norm bounded uaw-null sequence $(x_n)$ of $E$, we have ${\underset{n\longrightarrow +\infty}{\lim}} f_n(x_n)= 0$.
	   \item Every almost (L)-set $A$ of $E'$ is L-weakly compact.
		\item Every almost Dunford-Pettis operator $S:E\longrightarrow X$ from $E$ into an arbitrary Banach space $X$ is M-weakly compact.
	\end{enumerate}
\end{corollary}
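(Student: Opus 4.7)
The plan is to derive this corollary as a direct specialization of Theorem \ref{adplw} applied to the identity operator $\mathrm{Id}_E : E \longrightarrow E$. Under this choice of $T$, one has $T' = \mathrm{Id}_{E'}$, so condition (2) of the theorem becomes exactly condition (2) here, condition (3) becomes condition (3), condition (4) reads ``$A = T'(A)$ is L-weakly compact for every almost (L)-set $A \subset E'$'' which is condition (4), and condition (5) asserts that $S = S\circ T$ is M-weakly compact for every almost Dunford--Pettis $S : E \to X$, which is condition (5). So four of the five equivalences come for free from the theorem; the only thing to check by hand is that condition (1) of the theorem, specialized to $T = \mathrm{Id}_E$, coincides with the order continuity of $E'$.

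For that remaining step I would invoke the standard characterization that a Banach lattice is order continuous if and only if each of its order intervals is L-weakly compact, a fact already noted in the paper just before Lemma \ref{lemma1}. Applied to $E'$: the space $E'$ is order continuous precisely when $[-f,f]$ is an L-weakly compact subset of $E'$ for every $f \in (E')^+$. Since condition (1) of Theorem \ref{adplw} with $T = \mathrm{Id}_E$ states exactly that $\mathrm{Id}_E'[-f,f] = [-f,f]$ is L-weakly compact in $E'$ for every $f \in (E')^+$, the two conditions agree.

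Combining the two observations, the chain of equivalences in Theorem \ref{adplw} transports verbatim, and the corollary follows. No real obstacle is expected here: the work has already been done inside the theorem, and the only subtlety is remembering the order-continuity-via-L-weakly-compact-intervals equivalence to match up the first clauses. I would simply write a two-line proof saying ``Apply Theorem \ref{adplw} to $T = \mathrm{Id}_E$ and use the fact that $E'$ is order continuous if and only if every order interval of $E'$ is L-weakly compact.''
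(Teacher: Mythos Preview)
Your proposal is correct and matches the paper's approach exactly: the paper presents this corollary without proof, simply as a consequence of Theorem~\ref{adplw}, and your specialization to $T=\mathrm{Id}_E$ together with the order-continuity-via-L-weakly-compact-intervals characterization (stated in the paper just before Lemma~\ref{lemma1}) is precisely how that consequence is meant to be read.
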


%%%%%%%%%%%%%%%%%%%%%%%%%%%%%%%%%%%%%%%%%%%%%%%%%%%%%%%%%%%%
%%%%%%%%%%%%%%%%%%%%%%%%%%%%%%%%%%%%%%%%%%%%%%%%%%%%%%%%%%%%
\begin{remark}	
%\begin{enumerate}
%\item For an operator  $T : X \longrightarrow Y$ between two Banach spaces $E$ and $F$ we have:
%               \begin{center} $T$ is a Dunford-Pettis operator if and only if $T'(B_F')$ is an (L)-set.\end{center}
It follows from Theorem 3.1 \cite{chen19} that for an operator  $T : E \longrightarrow Y$ between Banach lattice $E$ and Banach space $Y$ we have:\\
                $T$ is uaw-Dunford-Pettis operator if and only if $T'(B_Y')$ is an L-weakly compact subset of $E'$.
%\end{enumerate}
\end{remark}

In the following result, we give a sets and operators characterizations of order continuous Banach lattices.

\begin{theorem}\label{theo00}
	Let $E$ be a Banach lattice. The following statements are equivalents:
	\begin{enumerate}
		\item $E'$ is order continuous.
		\item Every (L)-set of $E'$ is an L-weakly compact subset of $E'$.
		\item Every Dunford-Pettis operators $T: E \longrightarrow X$ is uaw-Dunford-Pettis, for any arbitrary Banach space $X$.
	\end{enumerate}
\end{theorem}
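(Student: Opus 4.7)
The plan is to close a cycle $(1) \Rightarrow (2) \Rightarrow (3) \Rightarrow (1)$, leaning heavily on the corollary immediately preceding this theorem (which characterizes order continuity of $E'$ via almost $(L)$-sets and condition $|x_n| \overset{w}\longrightarrow 0$ for uaw-null sequences) and on the remark identifying $T$ uaw-Dunford--Pettis with $T'(B_{X'})$ being L-weakly compact.

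For $(1) \Rightarrow (2)$, the key observation is that every $(L)$-set is automatically an almost $(L)$-set, since the defining condition for an $(L)$-set must hold against every weakly null sequence, including the disjoint ones. The preceding corollary then yields that $E'$ order continuous forces all almost $(L)$-sets to be L-weakly compact, so in particular every $(L)$-set is L-weakly compact.

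For $(2) \Rightarrow (3)$, I would use the standard duality identity
$$\sup_{g \in T'(B_{X'})} |g(x)| = \sup_{f \in B_{X'}} |f(T(x))| = \|T(x)\|.$$
When $T$ is Dunford--Pettis, the right-hand side tends to zero along every weakly null sequence in $E$, which is exactly the statement that $T'(B_{X'})$ is an $(L)$-set of $E'$. By (2) this set is L-weakly compact, so the preceding remark immediately gives that $T$ is uaw-Dunford--Pettis.

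The nontrivial step is $(3) \Rightarrow (1)$. Here the plan is to verify item (2) of the preceding corollary, namely that every norm-bounded uaw-null sequence $(x_n) \subset E$ satisfies $|x_n| \overset{w}\longrightarrow 0$. Given such a sequence, observe first that $(|x_n|)$ is itself norm-bounded and uaw-null. For an arbitrary $f \in E'$, consider the scalar operator $T_f \colon E \to \mathbb{R}$, $y \mapsto f(y)$: being of rank at most one, it is compact, hence trivially Dunford--Pettis. Applying (3) with $X = \mathbb{R}$ makes $T_f$ uaw-Dunford--Pettis, so $|f(|x_n|)| = \|T_f(|x_n|)\| \to 0$. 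Since $f$ was arbitrary, $|x_n| \overset{w}\longrightarrow 0$, and the preceding corollary delivers (1). The only subtlety here is noticing that the ostensibly strong hypothesis (3), quantified over all Banach spaces $X$, has enough bite when specialized to $X = \mathbb{R}$ and applied to the absolute values $|x_n|$ rather than $x_n$ themselves; this is precisely what is needed to reverse the chain of equivalences from the corollary.
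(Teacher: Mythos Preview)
Your proof is correct. For $(1)\Rightarrow(2)$ and $(2)\Rightarrow(3)$ you proceed essentially as the paper does, the only cosmetic difference being that you invoke the preceding corollary (item (4), every almost $(L)$-set is L-weakly compact) together with the trivial inclusion ``$(L)$-set $\Rightarrow$ almost $(L)$-set,'' whereas the paper cites Theorem~7 of \cite{Zabeti} and Theorem~3.1 of \cite{chen19} directly.

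The real divergence is in $(3)\Rightarrow(1)$. The paper argues as follows: given any Dunford--Pettis $T:E\to X$, hypothesis (3) makes $T$ uaw-Dunford--Pettis, then Proposition~2.6 of \cite{Zabeti19} forces $T$ to be weakly compact; since every Dunford--Pettis operator out of $E$ is therefore weakly compact, the classical Theorem~5.102 of \cite{AB} yields that $E'$ is order continuous. Your route instead specializes (3) to the rank-one operators $T_f:y\mapsto f(y)$, applies the uaw-Dunford--Pettis conclusion to the sequence $(|x_n|)$ (which is indeed norm-bounded and uaw-null since $\big||x_n|\big|\wedge u=|x_n|\wedge u$), and thereby verifies item (2) of the preceding corollary directly. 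Your argument is more self-contained: it stays entirely within the machinery already set up in the paper and avoids both the external result from \cite{Zabeti19} and the nontrivial classical characterization 5.102 of \cite{AB}. The paper's route, by contrast, has the virtue of linking the theorem to the well-known weak-compactness criterion for order continuity of the dual, which may be conceptually illuminating even if less economical.
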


\begin{proof}
$1)\Longrightarrow 2)$ Follows from Theorem 7 \cite{Zabeti} and Theorem 3.1 \cite{chen19}.

$ 2)\Longrightarrow 3)$ If $T$ is a Dunford-Pettis operator from $E$ into a Banach space $X$, then  $T'(B_X')$ is an (L)-set and hence $T'(B_X')$ is an L-weakly compact subset of $E'$, that is $T $ is uaw-Dunford-Pettis.

 $ 3)\Longrightarrow 1)$ Let $T : E \longrightarrow X$ be a Dunford-Pettis operator, by our hypothesis $T$ is uaw-Dunford-Pettis and it follows from Proposition 2.6 \cite{Zabeti19} that $T$ is weakly compact.  So, by Theorem 5.102 \cite{AB} we deduce that $E'$ is order continuous.
\end{proof}

%%%%%%%%%%%%%%%%%%%%%%%%%%%%%%%%%%%%%%%%%%%%%%%%%%%%%%%%%%
%%%%%%%%%%%%%%%%%%%%%%%%%%%%%%%%%%%%%%%%%%%%%%%%%%%%%%%%%%%%%%
In the following result we obtain a characterization of order (L)-Dunford-Pettis operators based on the uaw-convergence.
\begin{theorem}\label{theo001}
Let $T:E\longrightarrow F$ be an operator from a Banach lattice $E$ into a Riesz space $F$. The following statements are equivalents:
\begin{enumerate}
\item $T(x_{\alpha})$ is uaw-convergent for every weakly convergent net $(x_{\alpha})\subset E$.
\item $T(x_{n})$ is uaw-convergent for every weakly convergent sequence $(x_{n})\subset E$.
\item $T$ is an order (L)-Dunford-Pettis operator.
\end{enumerate}
\end{theorem}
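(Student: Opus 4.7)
The plan is to run the cycle $(1)\Rightarrow (2)\Rightarrow (3)\Rightarrow (1)$. The first implication is immediate since every sequence is a net. The heart of the argument is the equivalence between the uaw-convergence of $T(x_n)$ and the $(L)$-set behaviour of $T'$ on order intervals of $F'$, which is mediated by the Riesz--Kantorovich formula $\sup_{f\in[-f_0,f_0]}|f(y)|=f_0(|y|)$.

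For $(2)\Rightarrow (3)$, I would take an order-bounded $A\subset F'$, say $|f|\le f_0$ for all $f\in A$, and a weakly null sequence $(x_n)\subset E$. Riesz--Kantorovich reduces the $(L)$-set conclusion $\sup_{f\in A}|f(T(x_n))|\to 0$ to the single statement $f_0(|T(x_n)|)\to 0$. Hypothesis (2) gives that $T(x_n)$ is uaw-convergent, and since $T$ is weakly continuous we have $T(x_n)\overset{w}{\longrightarrow} 0$; an interleaving trick (inserting zeros between the $x_n$ produces another weakly null sequence whose $T$-image is uaw-convergent by (2), forcing equal limits on alternating subsequences) pins the uaw-limit down to $0$. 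Now the weakly convergent sequence $(T(x_n))$ has weakly relatively compact range $W$, and $(|T(x_n)|)\subset \mathrm{sol}(W)$ is uaw-null, so Theorem~\ref{theo0} yields $|T(x_n)|\overset{w}{\longrightarrow} 0$ and hence $f_0(|T(x_n)|)\to 0$.

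For $(3)\Rightarrow (1)$, let $(x_\alpha)\subset E$ converge weakly to $x$; replacing $x_\alpha$ by $x_\alpha-x$ I may assume $x_\alpha\overset{w}{\longrightarrow} 0$. Fix $u\in F^+$ and $g\in F'$, and without loss $g\ge 0$; the goal is $g(|T(x_\alpha)|\wedge u)\to 0$. Applying (3) to the order interval $[-g,g]\subset F'$ together with Riesz--Kantorovich gives $g(|T(z_n)|)\to 0$ for every weakly null sequence $(z_n)\subset E$. The main anticipated obstacle is exactly the passage from this sequential statement to the net $(x_\alpha)$: one cannot in general extract a weakly null sequence from a weakly null net. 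To bridge the gap I would adapt the proof of Theorem~\ref{theo0}, whose engine is a uniform-in-index truncation furnished by Theorem~4.37 of~\cite{AB}, to the net setting using that $T(x_\alpha)\overset{w}{\longrightarrow} 0$ by weak continuity of $T$. The cutoff $|T(x_\alpha)|\wedge u$ is order-bounded by $u$, which lets me localise the argument inside the solid hull of a weakly compact set and conclude $g(|T(x_\alpha)|\wedge u)\to 0$, completing the cycle.
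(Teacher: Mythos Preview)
Your treatment of $(2)\Rightarrow(3)$ is essentially the paper's: both hinge on Theorem~4.37 of~\cite{AB} (you via Theorem~\ref{theo0}, the paper by reproving it inline) to upgrade $T(x_n)\overset{uaw}{\longrightarrow}0$ to $|T(x_n)|\overset{w}{\longrightarrow}0$, and then invoke the characterisation from~\cite{kamal}. Your interleaving device to force the uaw-limit to be $0$ is more careful than the paper, which simply asserts it.

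The genuine gap is in $(3)\Rightarrow(1)$. You correctly flag the net-to-sequence obstacle, but your proposed cure does not work. Theorem~4.37 of~\cite{AB}---the engine of Theorem~\ref{theo0}---requires a weakly relatively compact set $W$ in order to produce the uniform truncation element, and for a weakly convergent \emph{net} $(T(x_\alpha))$ the range $\{T(x_\alpha):\alpha\}$ is in general not weakly relatively compact. Your fallback, that $|T(x_\alpha)|\wedge u\in[0,u]$, does not help either: order intervals in a general Banach lattice $F$ are weakly compact only when $F$ has order-continuous norm, which is not assumed here. So there is no weakly compact set into whose solid hull you can ``localise'', and the adaptation of Theorem~\ref{theo0} you sketch cannot get off the ground.

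The paper's route for $(3)\Rightarrow(1)$ is quite different and avoids Theorem~4.37 altogether: argue by contradiction, extract an increasing sequence of indices $(\alpha_n)$ along which $f(|T(x_{\alpha_n})|\wedge u)>\epsilon$, and then apply the sequential characterisation of~(3) to the \emph{sequence} $(x_{\alpha_n})$ to obtain $|T(x_{\alpha_n})|\overset{w}{\longrightarrow}0$, contradicting $f(|T(x_{\alpha_n})|)>\epsilon$. Note, however, that this step tacitly assumes $(x_{\alpha_n})$ is itself weakly null, which is not automatic for nets over arbitrary directed sets; so the very obstacle you identified is not fully addressed in the paper's argument either.
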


\begin{proof}
$1)\Longrightarrow 2)$ Obvious.

$ 2)\Longrightarrow 3)$ Let $(x_n)\subset E$ be a sequence such that $ x_{n} \overset{w}\longrightarrow 0$, then the set $W=\{T(x_1),T(x_2),.... \}$ is weakly relatively compact. We have to show that  $|T(x_{n})| \overset{w}\longrightarrow 0$. For this end, fix $\epsilon >0$ and let $f \in (F')^+$, by Theorem 4.37 \cite{AB} there exists $u\in F^+ $  such that $$ f\left( (|T(x_n)|-u)^{+}\right)< \frac{\epsilon}{2} \quad \text{for all} \ n.$$
By our hypothesis, $T(x_{n}) \overset{uaw}\longrightarrow 0$. So, there exists some $n_0$ satisfying
$$ f\left( |T(x_n)|\wedge u\right) < \frac{\epsilon}{2} \quad \text{for all} \ n \geq n_0.$$
From the following inequality $$ |f\left( |T(x_n)|\right)|\leq  f\left( |T(x_n)|\right) =f\left( |T(x_n)|\wedge u\right)+f\left( (|T(x_n)|-u)^{+}\right) \leq \epsilon$$
for all $n \geq n_0$. We get that  $|T(x_{n})| \overset{w}\longrightarrow 0$ and hence it follows from Theorem 2.5 \cite{kamal} that $T$ is  an order (L)-Dunford-Pettis operator.

$3)\Longrightarrow 1)$ Suppose that $x_\alpha  \overset{w}\longrightarrow 0 $, we have to show that  $ T(x_{\alpha}) \overset{uaw}\longrightarrow 0$. Assume by way of contradiction that the claim is false. Then, there exist some $\epsilon>0$, $u\in F^+$ , $f \in (F')^+$ and a sequence $( T(x_{\alpha_ n}))$  satisfying  $f\left( |T(x_{\alpha_ n})|\wedge u\right)>\epsilon$, where $(\alpha_ n)_n$ is an increasing sequence, hence $f\left( |T(x_{\alpha_ n})|\right)>\epsilon$. Now from our hypothesis, we see that $|T(x_{\alpha_n})|  \overset{w}\longrightarrow 0 $ which implies that $f\left( |T(x_{\alpha_ n})|\right)\longrightarrow 0$ and this contradicts  the fact that $f\left( |T(x_{\alpha_ n})|\right)>\epsilon$.
\end{proof}

As consequence of Theorem \ref{theo001} we find a characterisation of Banach lattices on which weakly convergence imply uaw-convergence.
\begin{corollary}\label{coro001}
Let $E$ be a Banach lattice. The following statements are equivalents:
\begin{enumerate}
\item Every  weakly convergent net is uaw-convergent.
\item Every weakly convergent sequence is uaw-convergent.
\item The lattice operations of $E$ are weakly sequentially continuous.
\end{enumerate}
\end{corollary}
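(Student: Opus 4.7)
The plan is to deduce this from Theorem \ref{theo001} by specialising the operator $T$ to the identity $\mathrm{Id}_E : E \longrightarrow E$, with $E$ viewed as a Riesz space on the right-hand side. Under this choice, conditions (1) and (2) of Theorem \ref{theo001} become, respectively, conditions (1) and (2) of the present corollary verbatim, so what remains is to identify the third condition of the theorem---that $\mathrm{Id}_E$ is an order (L)-Dunford-Pettis operator---with the weak sequential continuity of the lattice operations in (3).

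First I would invoke Theorem \ref{theo001} with $T = \mathrm{Id}_E$ to obtain the chain of equivalences
\[
(1) \Longleftrightarrow (2) \Longleftrightarrow \text{``$\mathrm{Id}_E$ is order (L)-Dunford-Pettis.''}
\]
So the only remaining task is to prove that $\mathrm{Id}_E$ is order (L)-Dunford-Pettis if and only if the lattice operations of $E$ are weakly sequentially continuous.

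For the forward direction, note that the proof of $(2)\Longrightarrow(3)$ inside Theorem \ref{theo001} actually establishes (via Theorem 4.37 of \cite{AB}) that $|T(x_n)| \overset{w}{\longrightarrow} 0$ whenever $x_n \overset{w}{\longrightarrow} 0$; applied to $T = \mathrm{Id}_E$ this yields $|x_n| \overset{w}{\longrightarrow} 0$ for every weakly null sequence, which is precisely (3). For the converse, assuming (3), every weakly null sequence satisfies $|x_n| \overset{w}{\longrightarrow} 0$, and then Theorem 2.5 of \cite{kamal} (already used in Theorem \ref{theo001}) tells us that $\mathrm{Id}_E$ is an order (L)-Dunford-Pettis operator.

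Since the whole argument is a clean specialisation of Theorem \ref{theo001}, I do not anticipate any real obstacle; the only point that requires a line of justification is the identification of ``$\mathrm{Id}_E$ is order (L)-Dunford-Pettis'' with (3), which is immediate from the characterisation of order (L)-Dunford-Pettis operators through weak convergence of moduli in \cite{kamal}.
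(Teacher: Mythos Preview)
Your proposal is correct and is exactly the intended route: the paper gives no separate proof but simply records the corollary as a consequence of Theorem~\ref{theo001}, and your specialisation $T=\mathrm{Id}_E$ together with the identification of ``$\mathrm{Id}_E$ is order (L)-Dunford--Pettis'' with weak sequential continuity of the lattice operations (via Theorem~2.5 of \cite{kamal}, or equivalently the computation $\sup_{|f|\le g}|f(x_n)|=g(|x_n|)$) is precisely what the paper has in mind. The only cosmetic remark is that you can cite Theorem~2.5 of \cite{kamal} directly for both directions of the last equivalence, rather than dipping back into the proof of Theorem~\ref{theo001}.
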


%%%%%%%%%%%%%%%%%%%%%%%%%%%%%%%%%%%%%%%%%%%%%%%%%%%%
%%%%%%%%%%%%%%%%%%%%%%%%%%%%%%%%%%%%%%%%%%%%%%%%%%%%
Note that an order (L)-Dunford-Pettis operator is not necessarily a Dunford-Pettis operator. In fact, the identity operator of the Banach lattice $c$ of all convergent sequences is order (L)-Dunford-Pettis (because the lattices operations of $c$ are weakly sequentially continuous), but is not a
Dunford-Pettis operator (because $c$ does not have the Schur property). Also, an uaw-Dunford-Pettis operators is not in general Dunford-Pettis. Indeed, if we take $E=L^2[0,1]$ and $F=c$ then it follows from Theorem 2 of \cite{weksted} that there exist two positives operators $S$ and $T$ with $S\leq T$ such that $T$ is Dunford-Pettis but $S$ is not one. As $E'$ is order continuous, then it follows from the Theorem \ref{theo00} that $T$ is uaw-Dunford-Pettis and hence $S$ is uaw-Dunford-Pettis.

As an other consequence of Theorem \ref{theo001} we have the following result.
\begin{corollary}\label{coro001}
Let $E$, $F$, $G$ be Banach lattices.
For the schema of operators $E\overset{T}\longrightarrow F\overset{S}\longrightarrow G$, if $T$ is order (L)-Dunford-Pettis  and $S$ is uaw-Dunford-Pettis then $S\circ T$ is a Dunford-Pettis operator.
\end{corollary}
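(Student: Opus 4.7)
The plan is to chain together, in order, three facts: that weakly null sequences are norm bounded, the characterization of order (L)-Dunford-Pettis operators given in Theorem \ref{theo001} (specifically the implication $(3)\Rightarrow(2)$), and the very definition of a uaw-Dunford-Pettis operator. No additional lemmas are needed beyond material already proved in the paper.

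First, I would fix an arbitrary weakly null sequence $(x_n)\subset E$; to prove $S\circ T$ is Dunford-Pettis it suffices to establish $\|S(T(x_n))\|\longrightarrow 0$. Since $(x_n)$ is weakly null, it is norm bounded, and continuity of $T$ then gives that $(T(x_n))$ is a norm bounded sequence in $F$. Next, because $T$ is order (L)-Dunford-Pettis, Theorem \ref{theo001} tells us that $T$ carries weakly convergent sequences to uaw-convergent ones; applying this to $(x_n)$ yields $T(x_n)\overset{uaw}\longrightarrow 0$ in $F$. Finally, $S$ being uaw-Dunford-Pettis means exactly that $\|S(y_n)\|\longrightarrow 0$ for every norm bounded uaw-null sequence $(y_n)$ in $F$; taking $y_n=T(x_n)$ concludes $\|S(T(x_n))\|\longrightarrow 0$.

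There is essentially no hard step here: the whole corollary is a one-line composition once Theorem \ref{theo001} is recognized as the bridge that converts weak-to-weak information (order (L)-Dunford-Pettis) into weak-to-uaw information, precisely the input that uaw-Dunford-Pettis-ness can consume. The only place one must be slightly careful is to note the norm boundedness of $(T(x_n))$, which is needed because the definition of uaw-Dunford-Pettis operator in this paper is formulated only for norm bounded uaw-null sequences.
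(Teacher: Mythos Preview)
Your proposal is correct and follows exactly the line the paper intends: the corollary is stated there as an immediate consequence of Theorem~\ref{theo001} with no further proof, and your argument spells out precisely that consequence---use Theorem~\ref{theo001} to turn the weakly null $(x_n)$ into a norm bounded uaw-null $(T(x_n))$, then apply the definition of uaw-Dunford--Pettis to $S$. Your care in verifying norm boundedness of $(T(x_n))$ is appropriate and there is nothing to add.
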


%%%%%%%%%%%%%%%%%%%%%%%%%%%%%%%%%%%%%%%%%%%%%%%%%%
%%%%%%%%%%%%%%%%%%%%%%%%%%%%%%%%%%%%%%%%%%%%%%%%%%%%%

We should note from Theorem 7 \cite{Zabeti} that in a Banach lattice with order continuous bidual every Dunford-Pettis set is L-weakly compact.

As an other consequence of the Theorem \ref{theo001}, we have the following result;
\begin{corollary}\label{coro1}
If $E'$ is a Banach lattice with weak sequentially  continuous lattice operations, then every L-weakly compact set of $E$ is a Dunford-Pettis set.\\
In particular, if $E''$ is order continuous and  the lattice operations of $E'$ are weakly sequentially continuous then \textbf{L-weakly compact sets} coincide with \textbf{Dunford-Pettis sets}.
\end{corollary}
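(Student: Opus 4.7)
The plan is to splice together two pieces already available in the paper: Corollary \ref{coro001} applied to the Banach lattice $E'$, and the uaw-characterization of L-weakly compact sets from \cite[Theorem 3.1]{chen19} that is quoted at the start of Section 3, namely that a bounded subset $A\subset E$ is L-weakly compact if and only if $\sup_{x\in A}|f_n(x)|\to 0$ for every norm bounded uaw-null sequence $(f_n)\subset E'$.

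For the first (main) assertion, I would fix an L-weakly compact set $A\subset E$ and an arbitrary weakly null sequence $(f_n)$ in $E'$; it suffices to show $\sup_{x\in A}|f_n(x)|\to 0$. Because $E'$ is itself a Banach lattice whose lattice operations are weakly sequentially continuous, Corollary \ref{coro001} (with the roles of $E$ played by $E'$) produces $f_n\overset{uaw}\longrightarrow 0$. As a weakly null sequence, $(f_n)$ is norm bounded, so the cited characterization applies and gives $\sup_{x\in A}|f_n(x)|\to 0$. Hence $A$ is a Dunford-Pettis set.

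For the "in particular" clause, the direction just proved gives that every L-weakly compact set of $E$ is a Dunford-Pettis set. For the reverse inclusion I would invoke the remark immediately preceding the corollary: when $E''$ is order continuous, \cite[Theorem 7]{Zabeti} guarantees that every Dunford-Pettis set of $E$ is L-weakly compact. Combining the two inclusions yields the equality of the two classes.

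The argument involves no genuine obstacle; the only point worth flagging is the observation that Corollary \ref{coro001} is being applied not to $E$ but to $E'$, which is legitimate because $E'$ is always a Banach lattice and the hypothesis on its lattice operations is exactly what that corollary requires.
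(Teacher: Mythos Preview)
Your argument is correct and is precisely the derivation the paper intends: the corollary is flagged as a consequence of Theorem \ref{theo001}, and you use its immediate specialization (Corollary \ref{coro001}) on $E'$ together with the \cite[Theorem 3.1]{chen19} characterization of L-weakly compact sets, while the ``in particular'' clause is handled by the remark preceding the corollary invoking \cite[Theorem 7]{Zabeti}. There is nothing to add.
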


%%%%%%%%%%%%%%%%%%%%%%%%%%%%%%%%%%%%%%%%%%%%%%%%%%%%%%%%%%%%
%%%%%%%%%%%%%%%%%%%%%%%%%%%%%%%%%%%%%%%%%%%%%%%%%%%%%%%%%%%%
%%%%%%%%%%%%%%%%%%%%%%%%%%%%%%%%%%%%%%%%%%%%%%%%%%%%%%%%%%%%%
%%%%%%%%%%%%%%%%%%%%%%%%%%%%%%%%%%%%%%%%%%%%%%%%%%%%%%%%%%%%%%%%
\begin{theorem}\label{ouadp}
	Let  $T:E\longrightarrow F$ be an operator between two Banach lattices $E$ and $F$ and let $A$ be a norm bounded solid subset of $E$. Then the following statements are equivalents:
	\begin{enumerate}
		\item $T(A)$ is an  L-weakly compact subset of $F$.
		\item The subsets $ T(\left[-x,x \right])$ and $\left\lbrace T(x_n),n\in \mathbb{N} \right\rbrace $ are L-weakly compact for each $x\in   A^{+} $ and for each disjoint sequence $(x_n)$ in $A^{+}$.
		\item For every norm bounded uaw-null sequence $(f_n)$ of $F'$, $|T'(f_n) |(x)\longrightarrow 0$ for all
			$x\in   A^{+} $ and $f_n(T(x_n))\longrightarrow 0$  for each disjoint sequence $(x_n)$ in $A^{+}$.
	\end{enumerate}
\end{theorem}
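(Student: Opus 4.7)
The implication $(1) \Rightarrow (2)$ is immediate from the solidity of $A$: for each $x \in A^+$ the order interval $[-x,x]$ sits inside $A$, so $T([-x,x]) \subseteq T(A)$, and likewise $\{T(x_n):n\in\mathbb{N}\} \subseteq T(A)$ for any sequence $(x_n) \subseteq A^+$; since every subset of an L-weakly compact set remains L-weakly compact, $(2)$ follows. For $(2) \Rightarrow (3)$ I would apply the sequential characterization of L-weak compactness recalled at the opening of Section~3 (Chen's Theorem~3.1): L-weak compactness of $T([-x,x])$ gives
$$
|T'(f_n)|(x) \;=\; \sup_{z \in [-x,x]} |T'(f_n)(z)| \;=\; \sup_{y \in T([-x,x])} |f_n(y)| \longrightarrow 0
$$
for every norm bounded uaw-null $(f_n) \subseteq F'$, and L-weak compactness of $\{T(x_k):k\in\mathbb{N}\}$ gives $\sup_k|f_n(T(x_k))| \to 0$, hence in particular $|f_n(T(x_n))| \to 0$.

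For the substantive direction $(3) \Rightarrow (1)$, I would invoke the same characterization and aim to prove $\sup_{a \in A}|f_n(T(a))| \to 0$ for an arbitrary norm bounded uaw-null $(f_n) \subseteq F'$. Solidity of $A$ identifies this supremum with $\sup_{x \in A^+} g_n(x)$, where $g_n := |T'(f_n)| \in (E')^+$ is norm bounded: for $a \in A$ one has $|T'(f_n)(a)| \le g_n(|a|)$ with $|a| \in A^+$, and conversely $g_n(x) = \sup_{|z|\le x}|T'(f_n)(z)|$, every such $z$ lying in $A$ by solidity when $x \in A^+$. Part~(a) of $(3)$ then reads $g_n(x) \to 0$ on $A^+$ pointwise. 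I would upgrade part~(b) to the parallel statement $g_n(x_n) \to 0$ for every disjoint $(x_n) \subseteq A^+$: choose $z_n$ with $|z_n| \le x_n$ and $|T'(f_n)(z_n)| \ge g_n(x_n)/2$; the sequences $(z_n^+)$ and $(z_n^-)$ are each disjoint and lie in $A^+$ by solidity, while $|T'(f_n)(z_n)| \le |f_n(T(z_n^+))| + |f_n(T(z_n^-))|$, so passing to a subsequence on which one sign dominates and invoking part~(b) forces $g_n(x_n) \to 0$.

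The main obstacle is then the passage from \emph{pointwise plus disjoint vanishing of $g_n$ on $A^+$} to the uniform vanishing $\sup_{x \in A^+} g_n(x) \to 0$. I would handle this by contradiction via a sliding-hump / fragmentation argument in the spirit of Meyer--Nieberg and Aliprantis--Burkinshaw: were the uniform vanishing to fail, extract $\varepsilon > 0$ and $x_n \in A^+$ with $g_n(x_n) > \varepsilon$, and inductively build a disjoint sequence $(y_k) \subseteq A^+$ with $y_k \le x_{n_k}$ and $g_{n_k}(y_k) > \varepsilon/2$. At each step, having chosen $v_{k-1} = y_1 + \dots + y_{k-1}$ (a sum since the $y_i$'s are mutually disjoint), the bound $g_n(v_{k-1}) \le \sum_{i<k} g_n(y_i)$ together with part~(a) gives $g_n(v_{k-1}) \to 0$ in $n$, which allows one to carve out of $x_{n_k}$ a fragment disjoint from $v_{k-1}$ while preserving a significant portion of $g_{n_k}(x_{n_k})$; solidity of $A$ keeps every fragment in $A^+$. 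The resulting disjoint sequence in $A^+$ then contradicts the disjoint vanishing established above, completing the proof. This fragmentation step --- which makes essential use of the solidity of $A$ and of both parts of~$(3)$ --- is the technically most delicate part of the argument.
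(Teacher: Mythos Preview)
Your proposal is correct and, in fact, considerably more detailed than the paper's own proof, which consists of a single sentence: ``It suffices to adapt the proof of Theorem 2.12 \cite{bouras} by using uaw-convergence instead of weak convergence.'' Your argument is precisely such an adaptation: you invoke the sequential characterization of L-weak compactness via uaw-null functionals (Chen's Theorem 3.1), reduce to controlling $g_n=|T'(f_n)|$ on $A^+$, and pass from pointwise-plus-disjoint vanishing to uniform vanishing via a sliding-hump disjointification exploiting the solidity of $A$. This is the standard scheme behind Bouras's Theorem 2.12, so your approach and the paper's coincide.

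Two minor remarks. First, in your upgrade of part~(b) to $g_n(x_n)\to 0$, the phrase ``passing to a subsequence on which one sign dominates'' is unnecessary: since $(z_n^+)$ and $(z_n^-)$ are each disjoint sequences in $A^+$, part~(b) of~(3) applies to both directly, giving $f_n(T(z_n^\pm))\to 0$ without any subsequence argument. Second, the fragmentation step you flag as ``most delicate'' is indeed the heart of the matter; the precise construction (choosing $n_k$ so that $g_{n_k}(4\sum_{j<k}x_{n_j})$ is small and taking $y_k=(x_{n_k}-4\sum_{j<k}x_{n_j})^+$, or an equivalent scheme) is exactly what appears in Bouras's proof and in the standard Meyer-Nieberg/Aliprantis--Burkinshaw arguments you cite, so your sketch is on target.
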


\begin{proof}
It suffice to adapt the proof of Theorem 2.12 \cite{bouras} by using uaw-convergence instead of weakly convergence.
\end{proof}

As consequence of the above theorem, we have the following characterization of L-weakly compact sets.
	\begin{corollary}
	Let $E$ be a Banach lattice and let $A$ be a norm bounded solid subset of $E$. The following statements are equivalent:
		\begin{enumerate}
			\item $A$ is an  L-weakly compact set.
			\item The subsets $ \left[-x,x \right]$ and $\left\lbrace x_n,n\in \mathbb{N} \right\rbrace $ are L-weakly compact for each
			$x\in   A^{+} $ and for each disjoint sequence $(x_n)$ in $A^{+}$.
			\item For every norm bounded uaw-null sequence $(f_n)$ of $E'$, $|f_n |(x)\longrightarrow 0$ for all
			$x\in   A^{+} $ and $f_n(x_n)\longrightarrow 0$  for each disjoint sequence $(x_n)$ in $A^{+}$.
		\end{enumerate}
	\end{corollary}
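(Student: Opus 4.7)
The plan is to obtain this corollary as an immediate specialization of Theorem \ref{ouadp}. I would apply that theorem to the identity operator $\operatorname{Id}_E : E \longrightarrow E$, which is a bounded linear operator between the Banach lattice $E$ and the Banach lattice $E$ itself, so the hypotheses of Theorem \ref{ouadp} are satisfied (and $A$ is already assumed norm bounded and solid, as required there).

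First, I would translate each of the three conditions of Theorem \ref{ouadp} under the substitution $T = \operatorname{Id}_E$. We have $T(A) = A$, so condition (1) of the theorem becomes condition (1) of the corollary, namely that $A$ itself is L-weakly compact in $E$. Next, for any $x \in A^+$ we have $T([-x,x]) = [-x,x]$, and for any disjoint sequence $(x_n)$ in $A^+$ we have $\{T(x_n) : n \in \mathbb{N}\} = \{x_n : n \in \mathbb{N}\}$; hence condition (2) of the theorem reads exactly as condition (2) of the corollary. Finally, since $\operatorname{Id}_E' = \operatorname{Id}_{E'}$, we have $|T'(f_n)|(x) = |f_n|(x)$ and $f_n(T(x_n)) = f_n(x_n)$, so condition (3) of the theorem matches condition (3) of the corollary verbatim.

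Since these three translated conditions are equivalent by Theorem \ref{ouadp}, the three conditions of the corollary are equivalent, which completes the argument. There is no real obstacle here beyond checking that every instance of $T$, $T'$, and $F$ in the statement of Theorem \ref{ouadp} substitutes cleanly under the choice $T = \operatorname{Id}_E$ and $F = E$, which the above bookkeeping verifies.
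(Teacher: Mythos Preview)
Your proposal is correct and follows exactly the approach intended by the paper, which presents the corollary as a direct consequence of Theorem~\ref{ouadp} without further proof. Specializing that theorem to $T=\operatorname{Id}_E$ and $F=E$ is precisely the bookkeeping you carried out, and all three conditions translate verbatim as you checked.
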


	\begin{remark}
	Let $T$ be an operator from a Banach space $X$ into a Banach lattice $F$. By the equality $\sup_{y\in T(B_X)}|(f_n)(y)|=||T'(f_n)||_{X'}$ for every norm bounded uaw-null sequence $(f_n)$ in $F'$, it follows easily that $T(B_X)$ is an L-weakly compact set in $F$ if and only if $T'$ is an uaw-Dunford-Pettis operator, where $B_X$ is the closed unit ball of $X$.
	\end{remark}

%We recall from \cite{Zabeti19} that an operator $T:E\longrightarrow X$ between Banach lattice $E$ and Banach space $X$ is said to be unbounded absolutely weak Dunford–Pettis operator (abb, uaw-Dunford–Pettis) if for every norm bounded sequence $x_n$ in $E$, $x_n {\overset{uaw}{\longrightarrow}}0$ implies $\|T(x_n)\|\longrightarrow 0$.

As consequence of Theorem \ref{ouadp}, we have the following result which characterizes the adjoint of uaw-Dunford-Pettis operators between two
Banach lattices.
\begin{corollary}\label{DPunDP}
	For an operator $T$ from a Banach lattice $E$ into a Banach lattice $F$, the following statements are equivalents:
	\begin{enumerate}
		\item The adjoint $T' : F' \longrightarrow E'$ is uaw-Dunford-Pettis.
		\item $T(B_E)$ is an L-weakly compact set.
		\item $ T(\left[-x,x \right])$ is L-weakly compact for each $x\in E^+$ and $\left\lbrace T(x_n), n\in \mathbb{N}\right\rbrace $ is an L-weakly compact set for each disjoint sequence $(x_n)$ in $B^{+}_E$.
		\item $|f_n |\overset{w^*}\longrightarrow 0$ and $f_n(T(x_n))\longrightarrow 0$  for every norm bounded uaw-null sequence $(f_n)$ of $F'$ and for each disjoint sequence $(x_n)$ in $B^{+}_E$, where $B_E$ is the unit ball of $E$.
	\end{enumerate}
\end{corollary}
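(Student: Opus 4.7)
The plan is to deduce this corollary from Theorem~\ref{ouadp} specialized to the norm bounded solid set $A := B_E$, combined with the Remark immediately preceding the statement.

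First, for the equivalence $(1)\Longleftrightarrow(2)$, I would directly invoke the preceding Remark: for any $f_n \in F'$ one has the identity
\[
\sup_{y\in T(B_E)}|f_n(y)| = \|T'(f_n)\|_{E'},
\]
so by Theorem 3.1 of \cite{chen19}, $T(B_E)$ is L-weakly compact in $F$ if and only if $\|T'(f_n)\|\longrightarrow 0$ along every norm bounded uaw-null sequence $(f_n)\subset F'$, which is exactly the defining condition of $T'$ being uaw-Dunford--Pettis.

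For the equivalences $(2)\Longleftrightarrow(3)\Longleftrightarrow(4)$, I would apply Theorem~\ref{ouadp} with $A := B_E$. This set is clearly norm bounded and solid, with $A^{+} = B_E^{+}$, so the theorem yields the three-way equivalence of: $T(B_E)$ being L-weakly compact; the sets $T[-x,x]$ and $\{T(x_n)\}$ being L-weakly compact for every $x \in B_E^{+}$ and every disjoint sequence $(x_n)\subset B_E^{+}$; and $|T'(f_n)|(x)\longrightarrow 0$ together with $f_n(T(x_n))\longrightarrow 0$ for every norm bounded uaw-null $(f_n)\subset F'$ and every disjoint $(x_n)\subset B_E^{+}$. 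These correspond to (2), (3), and (4) of the corollary, modulo extending the quantifier from $x \in B_E^{+}$ to $x \in E^{+}$ using positive scaling: any $x \in E^{+}$ is a positive scalar multiple of an element of $B_E^{+}$, and both the L-weak compactness of $T[-x,x]$ and the positive functional $|T'(f_n)|$ are positively homogeneous in $x$. The second extension yields $|T'(f_n)|\overset{w^*}\longrightarrow 0$ in $E'$, as required in (4).

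No substantive obstacle arises: the whole corollary is a clean specialization of Theorem~\ref{ouadp} at $A = B_E$, bolted to the preceding Remark. The only step requiring explicit attention is the positive homogeneity argument used to pass from $B_E^{+}$ to $E^{+}$ in both (3) and (4), which is entirely routine.
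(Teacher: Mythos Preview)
Your proposal is correct and follows exactly the paper's approach: the corollary is presented there simply as a consequence of Theorem~\ref{ouadp} applied to $A=B_E$, together with the preceding Remark for the equivalence $(1)\Longleftrightarrow(2)$. Note only that the paper's printed statement of (4) reads $|f_n|\overset{w^*}{\longrightarrow}0$ rather than the $|T'(f_n)|\overset{w^*}{\longrightarrow}0$ that your specialization of Theorem~\ref{ouadp} actually yields; this is evidently a typographical slip in the paper, and the condition you derive is the correct one.
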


%%%%%%%%%%%%%%%%%%%%%%%%%%%%%%%%%%%%%%%%%%%%%%%%%%%%%%%%%%%%%%%
%%%%%%%%%%%%%%%%%%%%%%%%%%%%%%%%%%%%%%%%%%%%%%%%%%%%%%%%%%%%%%%%
%%%%%%%%%%%%%%%%%%%%%%%%%%%%%%%%%%%%%%%%%%%%%%%%%%%%%%%%%%%%%%%%%
%%%%%%%%%%%%%%%%%%%%%%%%%%%%%%%%%%%%%%%%%%%%%%%%%%%%%%%%%%%%%%%%%

Now we give a new characterization of order weakly compact operators using the unbounded absolutely weakly convergence.
\begin{proposition}
For an operator $T:E\longrightarrow X$ from a Banach lattice $E$ into a Banach space $X$, the following statements are equivalents:
\begin{enumerate}
		\item $T$ is order weakly compact.
		\item For every order bounded sequence $(x_n)\subset E^+$ such that $x_n{\overset{uaw}{\longrightarrow}}0$, we have $\lim_{n\longrightarrow +\infty} \|Tx_n\|=0$.
		\item  For every order bounded sequence $(x_n)\subset E$ such that $x_n{\overset{uaw}{\longrightarrow}}0$, we have $\lim_{n\longrightarrow +\infty} \|Tx_n\|=0$.
	\end{enumerate}
\end{proposition}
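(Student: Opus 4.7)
My plan is to run the cycle $(3)\Rightarrow(2)\Rightarrow(1)\Rightarrow(2)\Rightarrow(3)$. The implication $(3)\Rightarrow(2)$ is immediate. For $(2)\Rightarrow(3)$ I would decompose $x_n=x_n^+-x_n^-$: both parts lie in $[0,u]$ and inherit uaw-nullity from $|x_n|$ by positive domination (the inequality $0\le|y_n|\wedge v\le|z_n|\wedge v$ transfers weak convergence to zero via positive functionals, then to general $f$ via $f=f^+-f^-$), and then $\|Tx_n\|\le\|Tx_n^+\|+\|Tx_n^-\|\to 0$ by (2). For $(2)\Rightarrow(1)$, a disjoint order bounded positive sequence $(x_n)\subset[0,u]$ is uaw-null by Zabeti's Lemma~2 (recalled in the introduction), so (2) yields $\|Tx_n\|\to 0$, and the classical Aliprantis--Burkinshaw characterization of order weakly compact operators via disjoint order bounded positive sequences gives (1).

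The substantive direction is $(1)\Rightarrow(2)$. Let $T$ be order weakly compact and $(x_n)\subset[0,u]$ be uaw-null; since $x_n\le u$ taking $v=u$ in the definition gives $x_n=x_n\wedge u\overset{w}\longrightarrow 0$ in $E$. I would restrict $T$ to the principal ideal $E_u$ with gauge norm $\|x\|_u=\inf\{\lambda>0:|x|\le\lambda u\}$; by Kakutani's theorem $E_u$ is lattice isometric to $C(K)$ for a compact Hausdorff $K$, with $u$ corresponding to $\mathbf 1$. Since $T([-u,u])\subseteq T([0,2u])-T(u)$ is relatively weakly compact by order weak compactness of $T$, the restriction $T|_{E_u}$ is a weakly compact operator, and Grothendieck's theorem on the Dunford--Pettis property of $C(K)$ upgrades it to a Dunford--Pettis operator.

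The key difficulty is that weak nullness of $(x_n)$ in $E$ need not transfer to weak nullness in the strictly finer weak topology of $E_u$ (e.g.\ in $E=L^1[0,1]$ with $u=\mathbf 1$, the sequence $\mathbf 1_{[0,1/n]}$ is uaw-null in $L^1$ but not weakly null in $E_u=L^\infty$), so Grothendieck's theorem does not apply directly to $(x_n)$. To bridge this gap I would invoke the Bartle--Dunford--Schwartz theorem applied to $\mathcal F:=T|_{E_u}^{*}(B_{X'})\subset E_u'\cong M(K)$, which is relatively weakly compact by Gantmacher. BDS supplies a positive control measure $\mu_0$ to which $\mathcal F$ is equi-absolutely continuous, and crucially $\mu_0$ may be realized as a normally convergent series $\sum_k 2^{-k}|T^*\xi_k||_{E_u}$ with $\xi_k\in B_{X'}$, hence is the restriction to $E_u$ of $\tilde\mu_0:=\sum_k 2^{-k}|T^*\xi_k|\in(E')^+$. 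From $x_n\overset{w}\longrightarrow 0$ in $E$ we get $\tilde\mu_0(x_n)\to 0$, hence $\mu_0(x_n)\to 0$, and the equi-absolute continuity of $\mathcal F$ with respect to $\mu_0$ converts this into $\|Tx_n\|=\sup_{\xi\in B_{X'}}|T^*\xi(x_n)|\to 0$, completing the proof.
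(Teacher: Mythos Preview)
Your cycle and the easy implications match the paper exactly: $(3)\Rightarrow(2)$ is trivial, $(2)\Rightarrow(3)$ goes by splitting into positive and negative parts, and the return to $(1)$ uses that disjoint sequences are uaw-null together with the classical disjoint-sequence characterization of order weak compactness. For $(1)\Rightarrow(2)$ you also start exactly as the paper does: from $0\le x_n\le u$ and $x_n\overset{uaw}{\longrightarrow}0$ you get $x_n=x_n\wedge u\overset{w}{\longrightarrow}0$ in $E$.

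The divergence is what happens next. The paper stops here and simply invokes Meyer-Nieberg, Corollary~3.4.9, which already says that an order weakly compact operator sends every positive order bounded weakly null sequence to a norm null sequence; this finishes $(1)\Rightarrow(2)$ in one line. You instead re-prove that corollary from first principles: pass to the principal ideal $E_u\cong C(K)$ via Kakutani, note that $T|_{E_u}$ is weakly compact, use Gantmacher to make $(T|_{E_u})^{*}(B_{X'})$ relatively weakly compact in $M(K)$, extract a Bartle--Dunford--Schwartz control measure that lifts to a positive functional on $E$, and then convert $\mu_0(x_n)\to 0$ plus uniform absolute continuity into $\|Tx_n\|\to 0$. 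This is correct (the control measure can indeed be taken of the form $\sum 2^{-k}|\nu_k|$ with $\nu_k\in\mathcal F$, and restriction to the ideal $E_u$ is a lattice homomorphism on $E'$, so your lift $\tilde\mu_0$ is legitimate; the final step is a Chebyshev/Vitali argument), and it is essentially the classical proof of the Meyer-Nieberg result you are circumventing. What you gain is self-containment and an explicit reason why the obstruction you noticed---weak nullness in $E$ need not survive in $E_u$---is harmless; what the paper gains is brevity by citing the result you rederive.
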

\begin{proof}
$1)\Longrightarrow 2)$ Let $(x_n)$ be an order bounded sequence of $E^+$ with $x_n{\overset{uaw}{\longrightarrow}}0$, then there exists $x\in E^+$ such that $0\leq x_n\leq x$ for all $n$. On the other hand $x_n{\overset{uaw}{\longrightarrow}}0$ implies that $x_n=x_n\wedge x{\overset{w}{\longrightarrow}}0$ and hence it follows from Corollary 3.4.9 \cite{Mey} that $\lim_{n\longrightarrow +\infty} \|Tx_n\|=0$.

$2)\Longrightarrow 3)$ Follows from the fact that $x_{n}{\overset{uaw}{\longrightarrow}}0$ implies that $x^{+}_{n}{\overset{uaw}{\longrightarrow}}0$ and $x^{-}_n{\overset{uaw}{\longrightarrow}}0$ and from the inequality $\|T(x_n)\|\leq \|T(x^{+}_{n})\|+\|T(x^{-}_{n})\|$.

$3)\Longrightarrow 1)$ Follows from the fact that every disjoint norm bounded sequence is uaw-null.
\end{proof}

As consequence, we have another characterization of order continuous Banach lattices.
\begin{corollary}\label{coro5}
	For a Banach lattice $E$, the following statements are equivalents:
	\begin{enumerate}
		\item $E$ is order continuous.
		\item For every order bounded sequence $(x_n)\subset E^+$ such that $x_n{\overset{uaw}{\longrightarrow}}0$, we have $\lim_{n\longrightarrow +\infty} \|x_n\|=0$.
		\item  For every order bounded sequence $(x_n)\subset E$ such that $x_n{\overset{uaw}{\longrightarrow}}0$, we have $\lim_{n\longrightarrow +\infty} \|x_n\|=0$.
	\end{enumerate}	
\end{corollary}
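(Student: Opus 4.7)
The plan is to derive this corollary as an immediate specialization of the preceding proposition to the identity operator $T = \mathrm{Id}_E : E \longrightarrow E$. Under this substitution, conditions (2) and (3) of the corollary become literally conditions (2) and (3) of the proposition, so the entire content amounts to showing that condition (1) of the corollary is equivalent to condition (1) of the proposition applied to $\mathrm{Id}_E$, i.e. that $E$ is order continuous if and only if $\mathrm{Id}_E$ is order weakly compact.

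The key step is the standard characterization of order continuity: $E$ is order continuous if and only if every order interval $[0,x]$ (with $x\in E^+$) is relatively weakly compact in $E$. This is a classical result of Aliprantis--Burkinshaw (see Theorem 4.9 of \cite{AB}), so I would invoke it directly rather than reprove it. By definition, $\mathrm{Id}_E$ is order weakly compact precisely when $\mathrm{Id}_E([0,x]) = [0,x]$ is relatively weakly compact for every $x\in E^+$, giving the desired equivalence $(1)\Longleftrightarrow$ ($\mathrm{Id}_E$ order weakly compact).

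With this equivalence in hand, the corollary is a direct transcription: applying the preceding proposition to $T=\mathrm{Id}_E$ yields $(1)\Longleftrightarrow(2)\Longleftrightarrow(3)$. I expect no real obstacle here, since both the proposition and the classical order-continuity characterization do all the work; the only thing to verify is that the identity operator fits the hypotheses of the proposition (it maps a Banach lattice into a Banach space), which is immediate. One could write the proof in a single short sentence: \emph{it suffices to apply the previous proposition to the identity operator of $E$ and recall that $E$ is order continuous if and only if every order interval of $E$ is relatively weakly compact.}
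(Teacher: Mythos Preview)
Your proposal is correct and matches the paper's intended argument: the corollary is stated immediately after the proposition as a direct consequence, with no separate proof, so specializing to $T=\mathrm{Id}_E$ together with the classical equivalence ``$E$ order continuous $\Longleftrightarrow$ every order interval is relatively weakly compact'' is exactly what the authors have in mind.
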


We should note that a limited subset of a Banach space is a Dunford-Pettis set, but the converse is not holds in general. By using Theorem 4.42 \cite{AB}, we can generalize the Theorem 2.6 \cite{bouras} as follows.
\begin{theorem}\label{theo1}
Let $E$ be a  Dedekind $\sigma$-complete Banach lattice and let $A$ be an order bounded set of
$E$. If $A$ is $|\sigma|\left( E,E'\right)$ -totally bounded, then $A$ is a limited set.
\end{theorem}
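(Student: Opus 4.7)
The plan is to argue by contradiction, paralleling the proof of \cite[Theorem 2.6]{bouras}, with the absolute weak topology $|\sigma|(E,E')$ replacing the weak topology used there. Suppose $A\subset[-u,u]$ (for some $u\in E^+$) is $|\sigma|(E,E')$-totally bounded but not limited. Then there exist a weak* null sequence $(f_n)\subset E'$, an $\varepsilon>0$, and $(x_n)\subset A$ with $|f_n(x_n)|>\varepsilon$ for every $n$.

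The first step, which is the heart of the argument, is to reduce $(f_n)$ to a norm bounded disjoint sequence of positive functionals that still witnesses the failure of limitedness on $[-u,u]$. Using the Dedekind $\sigma$-completeness of $E$ and Theorem 4.42 of \cite{AB}, a disjointification procedure yields, after passing to a subsequence, a norm bounded disjoint sequence $(g_n)\subset(E')^+$ together with points $x_n\in A$ satisfying $g_n(|x_n|)>\varepsilon/2$. The key point is that a general weak* null sequence need not have a weak* null absolute value, but on the fixed order interval $[-u,u]$ the disjoint decomposition isolates the relevant positive testing functionals in $(E_u)'$.

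The second step combines the $|\sigma|(E,E')$-total boundedness of $A$ with the disjointness of $(g_n)$. Any positive $g\in E'$ induces the $|\sigma|(E,E')$-continuous seminorm $p_g(x)=g(|x|)$, so $A$ admits finite $\delta$-nets $F_\delta\subset A$ for $p_g$. Choosing $g$ as a suitably weighted sum of the $g_n$'s, which converges in $E'$ thanks to the disjointness, the norm boundedness, and the Dedekind $\sigma$-completeness of $E'$, and making a diagonal-type choice of $y_n\in F_\delta$ approximating each $x_n$ in $p_g$, the weak* nullity of $(g_n)$ on the finite sets $F_\delta$ together with $g_n(|x_n-y_n|)\le p_g(x_n-y_n)/c_n$ is used to force $g_n(|x_n|)\to 0$, contradicting the bound $\varepsilon/2$.

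The main obstacle is Step 1: extracting a disjoint positive sequence from the weak* null sequence $(f_n)$ while preserving the witness against the order bounded points $x_n$. This is exactly where Theorem 4.42 of \cite{AB} and the Dedekind $\sigma$-completeness of $E$ intervene, and it is the step that has to be adjusted from \cite[Theorem 2.6]{bouras}, since in that reference weak convergence suffices to manipulate the functionals directly. A secondary technicality in Step 2 is balancing the weights entering the dominating functional $g$ with the approximation parameter $\delta$, which requires a careful diagonal selection rather than a single application of total boundedness.
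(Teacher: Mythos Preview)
Your proposal misidentifies both what Theorem 4.42 of \cite{AB} provides and how it is used in the argument. Theorem 4.42 is the weak* analogue of Theorem 4.37: given a weak* null sequence $(f_n)\subset E'$ and $u\in E^+$, the Dedekind $\sigma$-completeness of $E$ yields a \emph{single} functional $g\in(E')^+$ with $(|f_n|-g)^+(u)<\varepsilon$ for all $n$. It does not furnish a disjointification of $(f_n)$, and your Step~1 as written does not follow from it.

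More seriously, Step~2 breaks down even if you somehow obtained a norm bounded disjoint sequence $(g_n)\subset(E')^+$. You invoke ``the weak* nullity of $(g_n)$ on the finite sets $F_\delta$'', but a disjoint norm bounded sequence in $E'$ is weak* null precisely when $E$ has order continuous norm, which is strictly stronger than Dedekind $\sigma$-completeness (take $E=\ell^\infty$ and $g_n=e_n^*$). So the contradiction you aim for cannot be reached under the stated hypotheses. The weighted-sum/diagonal scheme you sketch does not repair this: with weights $c_n\to 0$ the bound $g_n(|x_n-y_n|)\le p_g(x_n-y_n)/c_n$ blows up, while with $c_n$ bounded away from $0$ the series $\sum c_n g_n$ need not converge in $E'$.

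The paper's proof, following \cite[Theorem~2.6]{bouras} verbatim with Theorem~4.42 replacing Theorem~4.37, avoids all of this. One fixes $A\subset[-u,u]$ and a weak* null sequence $(f_n)$; Theorem~4.42 gives $g\in(E')^+$ with $(|f_n|-g)^+(u)<\varepsilon$. Total boundedness in $|\sigma|(E,E')$ is applied \emph{once}, for the single seminorm $p_g$, to get a finite net $F\subset A$. For $x\in A$ and the matching $y\in F$ one splits $|f_n(x)|\le |f_n(x-y)|+|f_n(y)|$, estimates $|f_n(x-y)|\le g(|x-y|)+(|f_n|-g)^+(|x|+|y|)\le \varepsilon+2\varepsilon$, and uses only $f_n(y)\to 0$ on the finite set $F$ --- which is exactly weak* nullity, no passage to $|f_n|$ or to disjoint pieces needed. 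The modification from \cite{bouras} is thus in the \emph{test sequence} (weakly null for Dunford--Pettis sets, weak* null for limited sets), not in the topology carried by $A$, and this is precisely what the switch from 4.37 to 4.42 accommodates.
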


\begin{proof}
We use the some proof of Theorem 2.6 \cite{bouras} We just applied Theorem 4.42 \cite{AB} instead of Theorem 4.37 \cite{AB} which is used to derive the result given in Theorem 2.6 \cite{bouras}.
\end{proof}
%  \begin{proof}
%	Let $(f_n)$ be a weak* null sequence in $E'$ and let $x \in E^{+}$ such that $|y| \leq x$
%	for every $y \in A$. Fix  $\epsilon > 0$, by  Theorem 4.42 \cite{AB} there exists $g  \in  (E')^{+}$ such that
%	$(|f_n| - g)^{+}(x) <\epsilon$
%	 for each n.
%	Since $A$ is  $|\sigma|\left( E,E'\right)$ -totally bounded, there exists a finite set $\left\lbrace x_1,x_2,.....,x_k\right\rbrace    \subset A$,
%	such that for each $z \in A$, we have $g(|z - x_i|) < \frac{\epsilon}{4}$
%	 for at least one $1 \leq i \leq k $.
%	Since  $f_n \overset{w^{*}}\longrightarrow 0$, there exists $N$ with $|f_n(x_i)|<\frac{\epsilon}{4}$
%	 for each $i = 1,..... , k$ and
%	all $n \geq N$.
%	Now, let $z \in A $, choose $1 \leq i \leq k$ with $g(|z-x_i|)<\frac{\epsilon}{4}$
%	 and note that $|z-x_i|\leq 2x$
%	holds. In particular, for $n \geq N $, we have
%	$$ |f_n(z)|\leq f_n(|z - x_i|)+|f_n(x_i)|\leq (|f_n| - g)^{+}(|z-x_i|)+ g(|z-x_i|)+ |f_n(x_i)| \leq \frac{\epsilon}{4}\leq \epsilon,$$
%	the above inequality implies that $\sup_{x\in A}|f_n(z)|\longrightarrow 0$ and then $A$ is a limited set.
%\end{proof}

As consequence,

\begin{corollary}\label{coro3}
Let $E$ be a Dedekind $\sigma$-complete Banach lattice. Then, for every order bounded disjoint sequence $(x_n) $, the subset $\left\lbrace  x_n, n \in \mathbb{N} \right\rbrace $ is a limited set.
\end{corollary}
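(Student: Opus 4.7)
The plan is to reduce this to Theorem \ref{theo1}, which says that an order bounded $|\sigma|(E,E')$-totally bounded set in a Dedekind $\sigma$-complete Banach lattice is limited. Since the hypothesis already gives order boundedness, the task is to check that an order bounded disjoint sequence is $|\sigma|(E,E')$-totally bounded.

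First, I would fix $u \in E^+$ with $|x_n| \le u$ for every $n$ and note that since $(x_n)$, and hence $(|x_n|)$, is disjoint, the partial suprema satisfy $\sum_{k=1}^{N} |x_k| = \bigvee_{k\le N} |x_k| \le u$ for each $N$. Then for any $f \in E'$ one has $|f| \in (E')^+$ and
\[
\sum_{k=1}^{N} |f|(|x_k|) \;=\; |f|\!\Bigl(\sum_{k=1}^{N} |x_k|\Bigr) \;\le\; |f|(u),
\]
which is finite and independent of $N$. Hence the series $\sum_k |f|(|x_k|)$ converges in $\mathbb{R}$, so its general term satisfies $|f|(|x_n|) \to 0$.

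Since the seminorms $p_f(x) = |f|(|x|)$, $f \in E'$, generate the absolute weak topology $|\sigma|(E,E')$, this shows $x_n \to 0$ in $|\sigma|(E,E')$. Therefore the set $\{0\}\cup\{x_n : n\in\mathbb{N}\}$ is a convergent sequence together with its limit in the locally convex (hence uniform) space $(E,|\sigma|(E,E'))$, so it is compact and in particular totally bounded; a fortiori the set $\{x_n : n\in\mathbb{N}\}$ is $|\sigma|(E,E')$-totally bounded.

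Finally, since $\{x_n : n\in\mathbb{N}\}$ is order bounded (by $u$) and $|\sigma|(E,E')$-totally bounded, Theorem \ref{theo1} applies and gives that it is a limited subset of $E$, which is the desired conclusion. No real obstacle is expected; the only delicate point is verifying $|f|(|x_n|) \to 0$ from disjointness plus order boundedness, which is the computation displayed above.
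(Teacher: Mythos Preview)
Your proof is correct and follows the same reduction to Theorem~\ref{theo1} as the paper. The only difference is that where the paper simply invokes Corollary~3.43 of \cite{AB} to conclude that $\{x_n : n\in\mathbb{N}\}$ is $|\sigma|(E,E')$-totally bounded, you supply a self-contained argument for this fact via the summability estimate $\sum_k |f|(|x_k|)\le |f|(u)$; this buys independence from the external reference at the cost of a few extra lines, but the overall route is the same.
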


\begin{proof}
	By  Corollary 3.43 \cite{AB}, the subset $\left\lbrace  x_n, n \in \mathbb{N} \right\rbrace $ is $|\sigma|\left( E,E'\right)$ -totally bounded.	
\end{proof}

We recall that an operator $T:X\longrightarrow Y$ between two Banach space is said to be bounded below if $T$ is one-to-one and has closed range. On the other hand, an operator $T:X\longrightarrow Y$ is onto (respectively, bounded below) if and only if $T'$ is bounded below (respectively, onto).

\begin{lemma}\label{lemma00}
Let $E$ and $F$ be two Banach lattices. If $T:E\longrightarrow F$ is a positive operator which is one-to-one and onto, then $T'$ is a lattice homomorphism.
\end{lemma}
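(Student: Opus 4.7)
The plan is to prove $|T'(f)|=T'(|f|)$ for every $f\in F'$, which is the defining identity of $T'$ being a lattice homomorphism. One direction, $|T'(f)|\le T'(|f|)$, requires no bijectivity and holds for every positive operator: for $x\in E^+$ and $|y|\le x$, positivity of $T$ gives $|T(y)|\le T(|y|)\le T(x)$, hence $|f(T(y))|\le|f|(T(x))$, and taking the supremum over such $y$ yields $|T'(f)|(x)\le T'(|f|)(x)$.

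For the reverse inequality, the plan is to bring in the bijectivity of $T$. Since $T$ is a continuous bijection between Banach spaces, the open mapping theorem supplies a bounded inverse $T^{-1}:F\to E$. The pivotal structural step is to show that $T^{-1}$ is also positive, which promotes $T$ to a lattice isomorphism and, in particular, makes it interval-preserving: $T([-x,x])=[-T(x),T(x)]$ for every $x\in E^+$. Once this is established, every $z\in F$ with $|z|\le T(x)$ can be written as $T(y)$ for some $y\in[-x,x]$, so
\[T'(|f|)(x)=|f|(T(x))=\sup_{|z|\le T(x)}|f(z)|=\sup_{|y|\le x}|f(T(y))|=|T'(f)|(x),\]
finishing the equality and hence the lemma.

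The main obstacle is the positivity of $T^{-1}$, equivalently the interval-preserving property of $T$. The natural route is through the duality highlighted immediately before the lemma: bijectivity of $T$ forces $T'$ to be bijective with $(T')^{-1}=(T^{-1})'$, while positivity of $T$ gives positivity of $T'$. I would then try to transfer positivity back across the adjoint pair, using the canonical embeddings $E\hookrightarrow E''$ and $F\hookrightarrow F''$ which preserve positive cones, to compel $T^{-1}$ to respect the order. This transfer — rather than either inequality between $|T'(f)|$ and $T'(|f|)$ — is the delicate point, and it is where the full strength of the Banach-lattice hypotheses and the bijectivity of $T$ must be deployed.
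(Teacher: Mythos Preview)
Your proposal correctly isolates the crux: showing that $T^{-1}$ is positive (equivalently, that $T$ is interval-preserving, $T([-x,x])=[-Tx,Tx]$). However, this step cannot be completed, because it is \emph{false} in general. Take $E=F=\mathbb{R}^2$ with the coordinatewise order and let
\[
T=\begin{pmatrix}1&1\\0&1\end{pmatrix}.
\]
Then $T$ is positive, one-to-one and onto, but $T^{-1}=\begin{pmatrix}1&-1\\0&1\end{pmatrix}$ is not positive. Consequently $T'=\begin{pmatrix}1&0\\1&1\end{pmatrix}$ is \emph{not} a lattice homomorphism: for $f=(1,-1)$ one has $|T'f|=(1,0)\neq(1,2)=T'|f|$. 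So the lemma as stated is false, and your duality-transfer argument (``compel $T^{-1}$ to respect the order'') cannot succeed; there is simply no way to force positivity of $T^{-1}$ from positivity and bijectivity of $T$ alone.

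For comparison, the paper's proof uses the Riesz--Kantorovich formula and, in passing from the supremum over decompositions $y+z=x$ with $y,z\in E^+$ to $(f\vee g)(Tx)=\sup\{f(u)+g(v):u,v\in F^+,\ u+v=Tx\}$, tacitly assumes that every positive decomposition $u+v=Tx$ in $F$ is the $T$-image of a positive decomposition of $x$ in $E$. That is precisely the interval-preserving property you were trying to establish, and the same $2\times2$ example shows it fails. In short, both your proposal and the paper's argument founder on the same unprovable claim; you were right to flag it as the delicate point.
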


\begin{proof}
Let $f \in F'$, $g \in F'$ and $x \in E^+$, we have
\begin{eqnarray*}
% \nonumber % Remove numbering (before each equation)
  [T'(f)\vee T'(g)](x) &=&\sup\left\lbrace T'(f)(y) +T'(g)(z), y+z=x \right\rbrace \\
    &=& \sup\left\lbrace f(Ty) +g(Tz): y, z\in E^+ \text{ and } y+z=x \right\rbrace\\
   &=& \sup\left\lbrace f(Ty) +g(Tz): y, z\in E^+ \text{ and }  Ty+Tz=Tx \right\rbrace \\
    &=&(f\vee g)(Tx) = T'(f\vee g)(x).
\end{eqnarray*}
\end{proof}

\begin{theorem}
Let $E$, $F$ be two Banach lattices with $E$ Dedekind $\sigma$-complete and let $T:E\longrightarrow F$ be a positif and  one-to-one and onto operator. The following statements are equivalents:
	\begin{enumerate}
		\item $|T'(f_n)| \overset{w*}\longrightarrow 0$ for every norm bounded uaw-null sequence $(f_n)$ of $F'$.
		\item For every limited subset $A$ of $E$, we have $T(A)$ is an L-weakly compact subset of $F$.
		\item  $T$ is order weakly compact.
	\end{enumerate}
\end{theorem}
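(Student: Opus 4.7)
The plan is to close the cycle $(1)\Longleftrightarrow(2)\Longleftrightarrow(3)$ by appealing, for the two easy implications, to results already assembled in the paper and, for the third, by using Lemma \ref{lemma00} together with the bijectivity of $T$ to upgrade $T$ to a lattice isomorphism.

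First, $(1)\Longleftrightarrow(2)$ follows at once: Proposition \ref{prop21} rephrases (1) as the statement that $T[-x,x]$ is L-weakly compact in $F$ for every $x\in E^+$, and Theorem \ref{theorem01}, whose hypothesis that $T$ be positive is satisfied here, shows that the latter is in turn equivalent to (2).

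For $(2)\Longrightarrow(3)$, I use Meyer--Nieberg's criterion that $T$ is order weakly compact if and only if $\|T(x_n)\|\to 0$ for every order bounded disjoint sequence $(x_n)\subseteq E^+$. Fix such a sequence; Corollary \ref{coro3} (which uses that $E$ is Dedekind $\sigma$-complete) gives that $\{x_n:n\in\mathbb{N}\}$ is limited in $E$, so by (2) the image $\{T(x_n):n\in\mathbb{N}\}$ is L-weakly compact in $F$. Order bounded positive disjoint sequences in Banach lattices are weakly null, so $T(x_n)\overset{w}{\longrightarrow}0$ and hence $T(x_n)\overset{uaw}{\longrightarrow}0$; Proposition \ref{prop0} then forces $\|T(x_n)\|\to 0$.

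The main obstacle is $(3)\Longrightarrow(1)$, which requires promoting ``positive bijection'' to ``lattice isomorphism''. Lemma \ref{lemma00} gives that $T'$ is a positive lattice homomorphism; since $T$ is a Banach-space isomorphism by the open mapping theorem, $T'$ is also bijective, and a bijective positive lattice homomorphism is automatically an order isomorphism. Hence $(T^{-1})'=(T')^{-1}$ is positive, so $T^{-1}$ is positive and $T$ is a lattice isomorphism. In particular $T[0,x]=[0,T(x)]$ for each $x\in E^+$ and $T(E^+)=F^+$, so the order weak compactness of $T$ says precisely that every order interval of $F$ is weakly relatively compact, i.e., $F$ is order continuous. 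Invoking the corollary that follows Proposition \ref{prop21}, applied to $F$, one obtains $|f_n|\overset{w^*}{\longrightarrow}0$ in $F'$ for every norm bounded uaw-null sequence $(f_n)\subseteq F'$; combined with $|T'(f_n)|=T'(|f_n|)$ (Lemma \ref{lemma00}) this gives $|T'(f_n)|(x)=|f_n|(T(x))\to 0$ for every $x\in E$, which is precisely (1).
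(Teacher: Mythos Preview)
Your proof is correct, and your $(3)\Rightarrow(1)$ takes a genuinely different and cleaner route than the paper.

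For $(1)\Leftrightarrow(2)$ you simply invoke Proposition~\ref{prop21} and Theorem~\ref{theorem01}; the paper instead reproves $(1)\Rightarrow(2)$ by hand and closes the loop via $(3)\Rightarrow(1)$. Your use of the earlier results is more economical. For $(2)\Rightarrow(3)$ both arguments start the same way (Corollary~\ref{coro3} makes $\{x_n\}$ limited, hence $\{T(x_n)\}$ L-weakly compact); the paper then finishes with Lemma~\ref{lemma1}, while you use Proposition~\ref{prop0}. Your step ``$T(x_n)\overset{w}{\to}0$ hence $T(x_n)\overset{uaw}{\to}0$'' is correct because the $T(x_n)$ are \emph{positive} (so $0\le T(x_n)\wedge u\le T(x_n)$ forces $T(x_n)\wedge u\overset{w}{\to}0$); a one-line remark to that effect would help the reader, since weak convergence does not imply uaw-convergence in general.

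The real divergence is in $(3)\Rightarrow(1)$. The paper uses the factorization $T=S\circ Q$ through an order continuous lattice $G$ (Theorem~5.58 of Aliprantis--Burkinshaw), argues that $S'$ is an onto lattice homomorphism via Lemma~\ref{lemma00}, pushes the uaw-null sequence $(f_n)$ to a uaw-null $(S'f_n)$ in $G'$, and then exploits order continuity of $G$. You bypass the factorization entirely: Lemma~\ref{lemma00} plus bijectivity already makes $T'$ a bijective lattice homomorphism, so $T^{-1}$ is positive and $T$ is a lattice isomorphism; then $T[0,x]=[0,Tx]$ shows that order weak compactness of $T$ is precisely order continuity of $F$, and the corollary after Proposition~\ref{prop21} gives $|f_n|\overset{w^*}{\to}0$, whence $|T'f_n|=T'|f_n|\overset{w^*}{\to}0$. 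This is shorter, uses less machinery, and makes transparent that under the standing bijectivity hypothesis the whole theorem is essentially the statement that $F$ is order continuous. The paper's approach, on the other hand, isolates where the factorization theorem would be needed if one weakened the bijectivity assumption.
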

\begin{proof}
$1)\Longrightarrow 2)$ Let $(f_n)$ be a norm bounded uaw-null sequence in $F'$, then $T'(f_n) \overset{w*}\longrightarrow 0$. As $A$ is a limited subset of $E$, then we have ${\underset{x\in A}{\sup}}|T'(f_n)(x)|\longrightarrow 0$ and therefore  ${\underset{x\in A}{\sup}}|(f_n(Tx))|\longrightarrow 0$, that is ${\underset{z\in T(A)}{\sup}}|(f_n(z))|\longrightarrow 0$ which shows that $T(A)$ is an L-weakly compact subset of $F$.

$2)\Longrightarrow 3)$ Let $(x_n)$ be an order bounded disjoint sequence of $E$ and $(f_n)$ be a disjoint norm bounded sequence  of $F'$.
Then by Corollary \ref{coro3}  the subset $\left\lbrace  x_n, n \in \mathbb{N} \right\rbrace $ is limited and by our hypothesis the subset $\left\lbrace T( x_n), n \in \mathbb{N} \right\rbrace $ is L-weakly compact. As $(f_n)$ is a disjoint norm bounded sequence of $F'$, then $f_n \overset{uaw}\longrightarrow 0$ and hence $f_n(T(x_n)) \longrightarrow 0$. On the other hand, since $(x_n)$ be an order bounded disjoint sequence of $E$ then $|x_n |\overset{w}\longrightarrow 0$ which implies that $|Tx_n |\overset{w}\longrightarrow 0$  and hence  by Lemma \ref{lemma1} we have $||T(x_n)||\longrightarrow 0$, which shows that $T$ is an order weakly compact operator.

$3)\Longrightarrow 1)$ Since $T:E\longrightarrow F$ is order weakly compact, then by Theorem 5.58 \cite{AB} $T$ admits a factorization $T=S\circ Q$ through an order continuous Banach lattice $G$ such that:
\begin{enumerate}
	\item  $Q$ is a lattice homomorphism.
	\item $S$ is positive if $T$ is also positive.
	\end{enumerate}

%Note that $S$ is bijective. On the other hand, the adjoint operator $S'$ of $ S$ is onto.
 Since $T$ is one-to-one and onto then $S$ is also one-to-one and onto, so $S$ is one-to-one and bounded below and hence $S'$ is onto.
 %On the other hand, the adjoint operator $S'$ of $ S$ is onto. Indeed, for every $g \in G'^{+}$ it suffice to consider the will defined functional $f : F\longrightarrow \mathbb{R} $ defined by $f(Sx)=g(x)$ for every $x\in G^+$ (because $S$ is onto).
Moreover, by Lemma \ref{lemma00} we see that $S'$  is a lattice homomorphism.

Now, we are in position to show the assertion (1). Let $(f_n)$ be a norm bounded uaw-null sequence in $F'$ and $g \in G'^{+}$, then there exits $f \in (F')^+$ such that $S'(f)=g$. We have that $S'(f_n)\overset{uaw}\longrightarrow 0$, indeed: from the equality
                         $$S'(f_n)\wedge g =S'(f_n)\wedge S'(f)= S'(f_n \wedge f)$$
and from the fact that  $f_n\wedge f\overset{w}\longrightarrow 0$ we infer that $S'(f_n)\wedge g=S'(f_n\wedge f)\overset{w}\longrightarrow 0$, that is to say $S'(f_n)\overset{uaw}\longrightarrow 0$. Since $G$ is order continuous, it follows from Proposition 5 \cite{Zabeti} that $|S'(f_n)|\overset{w^*}\longrightarrow 0$, that is to say $|S'(f_n)|(x)\longrightarrow 0$ for every $x \in G$. Finally, it follows from the equality
\begin{eqnarray*}
	% \nonumber % Remove numbering (before each equation)
|T'(f_n)|(x) &=&\sup\left\lbrace |T'(f_n)(z)| , |z|\leq x \right\rbrace \\
	&=& \sup\left\lbrace |f_n(SQz)| ,  |Qz|\leq Qx \right\rbrace\\
	&\leq & \sup\left\lbrace|S'f_n(y)| ,  |y|\leq Qx \right\rbrace \\
		&=&|S'(f_n)|(Qx) \longrightarrow 0.
\end{eqnarray*}
for all $x\in E^+$, that $|T'(f_n)|\overset{w^{*}}\longrightarrow 0$ as desired.
\end{proof}

%As consequence, we have the following characterization of order continuous Banach lattice.
%\begin{corollary}\label{coro3}
%	Let $E$ be a $\sigma$-Dedekind  Banach lattice. The following statements are equivalents:
%	\begin{enumerate}
%		\item $E$ is order continuous.
%		\item Every limited subset of $E$ is L-weakly compact.
%	\end{enumerate}	
%	\end{corollary}

\begin{proposition}\label{unruaw}
Let $E$, $F$ be two Banach lattices and let $T:E\longrightarrow F$ be a lattice isomorphism.\\
If $(T(x_n))$ is an un-null sequence for every sequence $(x_n)\subset E$, then $(x_n)$ is an uaw-null sequence.
\end{proposition}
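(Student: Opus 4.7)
The plan is to leverage the lattice isomorphism structure of $T$ to transfer the un-convergence hypothesis in $F$ back to the sequence $(x_n)$ in $E$. In fact, the proof will yield the stronger conclusion that $(x_n)$ is un-null in $E$, from which the uaw-convergence follows immediately (since norm convergence of $|x_n| \wedge v$ to zero implies weak convergence).

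First, I would fix an arbitrary $v \in E^+$ and exploit the fact that a lattice isomorphism preserves all lattice operations. In particular, since $T(|x_n|) = |T(x_n)|$ and $T$ commutes with $\wedge$, we get
\[
T\bigl(|x_n| \wedge v\bigr) = T(|x_n|) \wedge T(v) = |T(x_n)| \wedge T(v).
\]
Because $T(v) \in F^+$ and $(T(x_n))$ is un-null in $F$ by hypothesis, the definition of un-convergence gives $\bigl\| |T(x_n)| \wedge T(v) \bigr\| \longrightarrow 0$, i.e.\ $\bigl\| T(|x_n| \wedge v) \bigr\| \longrightarrow 0$.

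Second, I would invoke the open mapping theorem: since $T$ is a bijective bounded operator between Banach spaces, $T^{-1}$ is bounded. Applying $T^{-1}$ to the previous estimate yields
\[
\bigl\| |x_n| \wedge v \bigr\| = \bigl\| T^{-1}\bigl(T(|x_n| \wedge v)\bigr) \bigr\| \leq \|T^{-1}\| \cdot \bigl\| T(|x_n| \wedge v) \bigr\| \longrightarrow 0.
\]
Since $v \in E^+$ was arbitrary, $(x_n)$ is un-null in $E$, and in particular uaw-null.

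There is essentially no obstacle: the proof is a direct two-line computation once one recognizes that the lattice homomorphism property lets us push $\wedge$ through $T$, and the open mapping theorem provides the bounded inverse needed to pull norm convergence back. The only subtlety worth noting is that the statement as written gives the stronger conclusion of un-convergence, and the uaw-convergence claimed in the proposition is simply a byproduct.
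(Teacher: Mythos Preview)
Your argument is correct, and in fact yields the stronger conclusion that $(x_n)$ is un-null. The paper's proof takes a slightly different route: rather than invoking the bounded inverse $T^{-1}$, it works on the dual side. Given $f\in E'$ and $u\in E^+$, the paper uses that a lattice isomorphism is bounded below, hence $T'$ is onto, so $f=T'(g)$ for some $g\in F'$; then
\[
f(|x_n|\wedge u)=g\bigl(T(|x_n|\wedge u)\bigr)=g\bigl(|T(x_n)|\wedge T(u)\bigr),
\]
and the right-hand side is bounded by $\|g\|\cdot\bigl\||T(x_n)|\wedge T(u)\bigr\|\to 0$. This establishes only the weak convergence $|x_n|\wedge u\overset{w}{\longrightarrow}0$, i.e.\ exactly uaw-nullness, whereas your use of $T^{-1}$ gives the norm convergence $\||x_n|\wedge v\|\to 0$ directly. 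Both approaches hinge on the same computation $T(|x_n|\wedge v)=|T(x_n)|\wedge T(v)$; your version is more elementary and delivers a stronger statement, while the paper's dual formulation fits the uaw framework used elsewhere in the article.
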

\begin{proof}
Let $f\in E'$, $(x_n)\subset E$ such that $(T(x_n))$ is an un-null sequence and $u\in E^+$. Since $T$ is a lattice isomorphism, then $T$ is bounded below and hence $T'$ is onto. So, there exists $g\in F'$ such that $T'(g)=f$. From the equality
\begin{eqnarray*}
% \nonumber % Remove numbering (before each equation)
  f(|x_n|\wedge u) &=& T'(g)(|x_n|\wedge u) \\
     &=& g(T(|x_n|\wedge u)) \\
     &=& g(T(|x_n|)\wedge T(u)) \\
      &=& g(|T(x_n)|\wedge T(u)) \\
    &\leq& \|g\|\cdot\||T(x_n)|\wedge T(u)\|
\end{eqnarray*}
we infer that $x_n\overset{uaw}\longrightarrow 0$.
\end{proof}

\begin{theorem}\label{uawun}
Let $E$, $F$ be two Banach lattices and let $T:E\longrightarrow F$ be an onto lattice homomorphism. Then the following statements are equivalents:
	\begin{enumerate}
		\item $(T(x_n))$ is un-null for every uaw-null sequence $(x_n)\subset E$.
		\item $(T(x_n))$ is un-null for every disjoint sequence $(x_n)\subset E$.
		\item $T$ is order weakly compact.
	\end{enumerate}
\end{theorem}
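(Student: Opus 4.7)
My plan is to establish the cyclic implication $(1) \Rightarrow (2) \Rightarrow (3) \Rightarrow (1)$. The first step $(1) \Rightarrow (2)$ is immediate because every disjoint sequence in a Banach lattice is uaw-null, as recalled in the excerpt via Lemma 2 of \cite{Zabeti}.

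For $(2) \Rightarrow (3)$, I would invoke the classical characterization that an operator $T:E\to X$ from a Banach lattice into a Banach space is order weakly compact exactly when $\|T(x_n)\|\to 0$ for every order bounded disjoint sequence $(x_n)\subset E^+$. Taking such a sequence with $0\leq x_n\leq x$, hypothesis $(2)$ gives $T(x_n)\overset{un}{\longrightarrow} 0$. Because $T$ is a lattice homomorphism, $0\leq T(x_n)\leq T(x)$, so $(T(x_n))$ is order bounded in $F$; and on order bounded sequences un-convergence coincides with norm convergence, as pointed out in the preliminaries. This forces $\|T(x_n)\|\to 0$, hence order weak compactness.

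The main step is $(3) \Rightarrow (1)$, and this is where the surjective lattice homomorphism hypothesis pulls its weight. Fix an uaw-null sequence $(x_n)\subset E$ and an arbitrary $v\in F^+$; the aim is $\||T(x_n)|\wedge v\|\to 0$. Since $T$ is onto, I would pick $y\in E$ with $T(y)=v$ and set $u=|y|\in E^+$; then $T(u)=|T(y)|=v$ because $T$ is a lattice homomorphism. The homomorphism property also yields the key identity
$$|T(x_n)|\wedge v=T(|x_n|)\wedge T(u)=T(|x_n|\wedge u).$$
The sequence $(|x_n|\wedge u)$ lies in the order interval $[0,u]$ and, straight from the definition of uaw-convergence evaluated at $u$, converges weakly to zero. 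Applying Corollary 3.4.9 of \cite{Mey} (which says an order weakly compact operator maps order bounded weakly null positive sequences to norm null sequences) gives $\|T(|x_n|\wedge u)\|\to 0$, i.e.\ $\||T(x_n)|\wedge v\|\to 0$. Since $v\in F^+$ was arbitrary, $T(x_n)\overset{un}{\longrightarrow}0$.

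The main obstacle is precisely the identity $|T(x_n)|\wedge v=T(|x_n|\wedge u)$: surjectivity supplies the preimage $u$ with $T(u)=v$, and the lattice homomorphism property is what lets $T$ commute with both the modulus and the infimum. Once this identity is in hand, everything else is a routine appeal to the standard behaviour of order weakly compact operators on order intervals.
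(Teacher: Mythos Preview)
Your proof is correct and follows essentially the same route as the paper's: the same cyclic chain $(1)\Rightarrow(2)\Rightarrow(3)\Rightarrow(1)$, the same reduction of $(2)\Rightarrow(3)$ to order bounded disjoint sequences via un-convergence coinciding with norm convergence on order intervals, and the same key step in $(3)\Rightarrow(1)$ of lifting $v$ to a preimage $u\in E^{+}$ and invoking Corollary~3.4.9 of \cite{Mey} on $|x_n|\wedge u$. Your treatment is in fact slightly cleaner: you explain how to obtain $u\in E^{+}$ with $T(u)=v$ (via $u=|y|$), and you record the identity $|T(x_n)|\wedge v = T(|x_n|\wedge u)$ rather than the inequality the paper writes, which is legitimate since $T$ is a lattice homomorphism.
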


\begin{proof}
$1)\Longrightarrow 2)$  Obvious.

$2)\Longrightarrow 3)$ Let $(x_n)$ be an order bounded disjoint sequence of $E$. Since $(x_n)$ is a disjoint sequence, then by our hypothesis  $(T(x_n))$ is un-null and since $T$ is order bounded then $(T(x_n))$ is an order bounded sequence and hence  ${\underset{n\longrightarrow +\infty}{\lim}} \|T(x_n)\| = 0$, that is $T$ is order weakly compact.

$3)\Longrightarrow 1)$ Let $(x_n)$ be a norm bounded uaw-null sequence of $E$ and let $v\in F^+$. Since $T$ is an onto lattice homomorphism, then there exists $u\in E^+$ such that $T(u)=v$ and
\begin{eqnarray*}
% \nonumber % Remove numbering (before each equation)
  \||T(x_n)|\wedge v\| &=&  \||T(x_n)|\wedge T(u)\|\\
   &\leq& \|T(|x_n|\wedge u)\|.
\end{eqnarray*}
On the other hand, $x_n{\overset{uaw}{\longrightarrow}} 0$ implies that $|x_n|\wedge u{\overset{w}{\longrightarrow}} 0$ and hence $(|x_n|\wedge u)$ is a positive order bounded sequence of $E$. As $T$ is order weakly compact, it follows from Corollary 3.4.9 \cite{Mey} that ${\underset{n\rightarrow +\infty}{\lim}} \|T(|x_n|\wedge u)\|= 0$ and hence ${\underset{n\rightarrow +\infty}{\lim}} \||T(x_n)|\wedge v\|= 0$, that is $T(x_n){\overset{un}{\longrightarrow}} 0$.

%Now, let $\varepsilon>0$. As $T$ is order weakly compact, it follows from Theorem 5.57 \cite{AB} that there exist $f\in E^+$ and $\delta>0$ such that $|y|\leq u$ and $f(|y|)<\delta$ implies that $\|T(y)\|<\varepsilon$. On the other hand, $x_n{\overset{uaw}{\longrightarrow}} 0$ implies that $|x_n|\wedge u{\overset{w}{\longrightarrow}} 0$, that is there exists $n_0\in \mathbb{N}$ such that for all $n \geq n_0$ we have $|f(|x_n|\wedge u)|< \delta$. So, we have $||x_n|\wedge u|\leq u$ and $|f(|x_n|\wedge u)|< \delta$ and hence $\|T(|x_n|\wedge u)\|< \varepsilon$ which implies that $\||T(x_n)|\wedge v)\|< \varepsilon$ for all $n \geq n_0$. That is  $T(x_n){\overset{un}{\longrightarrow}} 0$.
\end{proof}

As consequence of Proposition \ref{unruaw} and Theorem \ref{uawun}, we find the following characterization of order continuous norm.
	\begin{corollary}
	Let $E$ be a Banach lattice. The following statements are equivalents:
		\begin{enumerate}
			\item $(x_n)$ is uaw-null if and only if $(x_n)$ is un-null.
			\item Every disjoint sequence $(x_n)\subset E$ is un-null.
			\item $E$ is order continuous.
		\end{enumerate}
	\end{corollary}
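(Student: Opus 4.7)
The plan is to prove the chain $(1)\Rightarrow(2)\Rightarrow(3)\Rightarrow(1)$ by applying Proposition \ref{unruaw} and Theorem \ref{uawun} to the identity operator $I:E\longrightarrow E$, which is trivially an onto lattice isomorphism (in particular, an onto lattice homomorphism).

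For $(1)\Rightarrow(2)$ the idea is immediate: by [\cite{Zabeti}, Lemma 2] every disjoint sequence in $E$ is uaw-null, and the assumed equivalence in (1) then forces it to be un-null. For $(2)\Rightarrow(3)$ I would invoke the implication $(2)\Rightarrow(3)$ of Theorem \ref{uawun} with $T=I$: the hypothesis that $(I(x_n))=(x_n)$ is un-null for every disjoint sequence yields that $I$ is order weakly compact, meaning every order interval $[0,x]$ of $E$ is relatively weakly compact. Since order intervals are norm closed and convex, they are weakly closed, hence weakly compact, which is the classical characterization of order continuity of $E$.

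For $(3)\Rightarrow(1)$ I would split into two inclusions. The ``un-null implies uaw-null'' direction is handled by Proposition \ref{unruaw} applied to $T=I$ (a lattice isomorphism), and does not even require order continuity. For the converse, I use that order continuity of $E$ makes every order interval weakly compact, so $I$ is order weakly compact; then the implication $(3)\Rightarrow(1)$ of Theorem \ref{uawun} gives that $(I(x_n))=(x_n)$ is un-null whenever $(x_n)$ is uaw-null. Combining both inclusions yields the equivalence claimed in (1).

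There is no genuine obstacle here: the corollary is essentially a repackaging of Proposition \ref{unruaw} and Theorem \ref{uawun} in the special case $T=I$, together with the standard fact that $I$ is order weakly compact if and only if $E$ is order continuous. The only small point to be careful about is checking that the definition of order weakly compact operator, specialized to $I$, is indeed equivalent to order continuity of $E$, which follows from the equivalence between order continuity and weak compactness of order intervals that is already implicit in the discussion preceding Lemma \ref{lemma1}.
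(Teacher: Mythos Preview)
Your proposal is correct and matches the paper's intended approach: the paper gives no explicit proof, merely stating the corollary is a consequence of Proposition \ref{unruaw} and Theorem \ref{uawun}, and your specialization of both results to $T=I$ together with the standard equivalence between order continuity and order weak compactness of the identity is exactly how this is meant to be read. The only minor quibble is that Proposition \ref{unruaw} is not really needed for the ``un-null $\Rightarrow$ uaw-null'' direction (norm convergence of $|x_n|\wedge u$ trivially implies its weak convergence), but invoking it is harmless and in line with the paper's framing.
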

%%%%%%%%%%%%%%%%%%%%%%%%%%%%%%%%%%%%%%%%%%%ua-L set%%%%%%%%%%%%%%%%%%%%%%%%%%%%%
%%%%%%%%%%%%%%%%%%%%%%%%%%%%%%%%%%%%%%%%%%%%%%%%%%%%%%%%%%%%%%%%%%%%%%%%%%%%%%%%%%%%%%
%%%%%%%%%%%%%%%%%%%%%%%%%%%%%%%%%%%%%%%%%%%ua-L set%%%%%%%%%%%%%%%%%%%%%%%%%%%%%

\end{document}